\documentclass[a4paper]{amsart}
\usepackage{graphicx}
%\graphicspath{ {images/} }
\usepackage{etex}
\usepackage{amsthm,stmaryrd}
\usepackage{amssymb}
\usepackage{array}
\usepackage{tabu}
\usepackage{epsfig}
\usepackage[usenames,dvipsnames]{color}
\usepackage{verbatim}
\usepackage{hyperref}
\usepackage{mathrsfs}
\usepackage[all]{xy}
\usepackage{xcolor}
\usepackage{float}
\usepackage{enumerate}
\usepackage{tikz}   
\usepackage{changepage} 
\usetikzlibrary{arrows,calc,positioning,decorations.pathreplacing}

\usepackage{pst-node}
\usepackage{tikz-cd}

\newcommand{\Aut}{\operatorname{Aut}}

\newcommand{\Z}{\mathbb{Z}}

\newcommand{\C}{\mathbb{C}}
\newcommand{\N}{\mathbb{N}}

\newcommand{\U}{\mathscr{U}}

\newcommand{\qs}{q}

\definecolor{dgreen}{RGB}{0,150,0}

\newtheorem{definition}{Definition}[subsection]
\newtheorem{theorem}[definition]{Theorem}

\newtheorem*{theorem*}{Theorem}

\newtheorem{proposition}[definition]{Proposition}
\newtheorem{lemma}[definition]{Lemma}
\newtheorem{remark}[definition]{Remark}
\newtheorem{notation}[definition]{Notation}
\newtheorem{convention}[definition]{Convention}

\newtheorem{corollary}[definition]{Corollary}
\newcounter{exo} \newcounter{numexercice}
\renewcommand{\theexo}{\arabic{exo}}

\newcounter{IntroCounter}
\stepcounter{IntroCounter}

\begin{document}
\title[$U_q(sl(2))-$quantum invariants unified via intersections of Lagrangians ]{$U_q(sl(2))-$quantum invariants unified via intersections of embedded Lagrangians} 
  \author{Cristina Ana-Maria Anghel}
\address{University of Geneva, Section de mathématiques, 
Rue du Conseil-Général 7-9, Geneva, CH 1205 Switzerland} 

\email{Cristina.Palmer-Anghel@unige.ch} 
  
\thanks{ }
  \date{\today}
\begin{abstract}
In this paper we prove a unified model for $U_q(sl(2))$ quantum invariants through intersections of embedded Lagrangians in configuration spaces.
More specifically, we construct a {\em state sum of Lagrangian intersections in the configuration space in the punctured disc}, which is a polynomial in three variables. It {\em recovers the coloured Jones polynomial and the coloured Alexander polynomial} through specialisations of coefficients. This formula works for oriented links coloured with the same representation of the quantum group and can be evaluated at roots of unity. As a corollary, the Jones and Alexander polynomials come both as {\em specialisations of an intersection pairing between embedded Lagrangians} in configuration spaces, which is suitable for computations. In particular, we obtain the {\em first intersection model for the Jones polynomial} from intersections between submanifolds which are given by {\em arcs and circles} in the punctured disc.
\end{abstract}

\maketitle
\setcounter{tocdepth}{1}
\vspace{-10mm} 
 \tableofcontents
 {
\vspace{-10mm} 
\section{Introduction} 
The families of quantum invariants coming from the quantum group $U_q(sl(2))$ are powerful invariants which are conjectured to encode rich geometrical information about knot complements. More specifically the  representation theory of $U_q(sl(2))$ with generic $q$ leads to the family of invariants $\{J_N(L,q) \in \mathbb Z[q^{\pm 1}]\}_{N \in \mathbb N}$ called coloured Jones polynomials. This sequence recovers the original Jones polynomial as its first term. Dually, the quantum group at roots of unity $U_{\xi_N}(sl(2))$, leads to a sequence of non-semisimple invariants, called coloured Alexander polynomials (or ADO invariants \cite{ADO}), recovering the original Alexander polynomial.  So far, the Alexander polynomial is well understood in terms of the geometry of the knot complement but it is still a deep problem to understand the Jones polynomial by means of geometry. A first step in this direction was done by Lawrence and Bigelow in \cite{Law} and \cite{Big}. In \cite{Law1} Lawrence introduced a sequence of representations of the braid group, on the homology of coverings of configuration spaces. Based on that she and later Bigelow showed that the Jones polynomial is an intersection pairing between homology classes, called noodles and forks, in a covering of the configuration space in the punctured disc. We will call such a description a {\em topological model}. They showed also a topological model for the $A_N$ polynomials (specialisations of the HOMFLY-PT polynomial). All this work is based on the description of invariants through skein theory.
Also, in \cite{Big4}, Bigelow, Florens and Cattabriga gave a topological model for the original Alexander polynomial. A further step towards the topological understanding of quantum invariants was done by Ito, who described the loop expansions of the coloured Jones polynomials (\cite{Ito3}) as well as the coloured Alexander polynomials (\cite{Ito2}) as {\em sums of traces of homological representations}. This used identifications between quantum and homological representations of the braid group introduced by Kohno in \cite{Koh}. Further on, in \cite{Cr1} and \cite{Cr2} the author constructed {\em topological models} for the coloured Jones and coloured Alexander invariants. However, the cycles from these models had rather complicated descriptions. 

Recently, in \cite{Cr3} the author showed a {\em unified topological model} for the coloured Jones and Alexander invariants, as intersections between explicit immersed Lagrangians in the configuration spaces. This used a more explicit version of Kohno's identification provided by Martel in \cite{Martel}. These homology classes were given by lifts of linear combinations of immersed Lagrangians. However, from the computational point of view, the choices of lifts--given by paths to the base points-- were non-trivial to deal with (see figure \ref{fig:classes}).
\subsection{Results} In this paper we provide a {\em unified model} for the $U_q(sl(2))$ quantum invariants, from a {\em state sum of Lagrangian intersections}. 
\begin{itemize}
\item{ \bf [Embedded model]} The main result, Theorem \ref{THEOREM}, shows a unified state sum model over $3$ variables, $\Lambda_N(\beta_n)$, constructed from graded intersections between embedded Lagrangians in the configuration space in the punctured disk, recovering the $N^{th}$ coloured Jones and the $N^{th}$ coloured Alexander polynomials of the closure. This model is suitable for computations.
\item{\bf [Jones and Alexander polynomials]} In particular, for $N=2$ we obtain a geometric model -- constructed from graded intersections between submanifolds given by arcs and circles in the punctured disc -- which specialises to both Jones and Alexander polynomials. It is the {\em first intersection model for the Jones polynomial} obtained from {\em arcs and circles}. 

The current paper is used for a sequel \cite{Cr} where show that the above intersection model can be read from a Heegaard diagram on a surface.  
\item{\bf [Immersed model]} Along the way, in Theorem \ref{THEOREMIM}, we describe an easier immersed formula than the one from \cite{Cr3}, denoted by $\Omega_N(\beta_n)$, which is constructed with different paths to the base points, suitable for computations, recovering as well the two quantum invariants (see figure \ref{introtikz}).  
 \end{itemize}
 These two models set up a {\em framework for investigating categorification problems}.
\subsection{Description of the homological context} For $n,m,k,l \in \N$, we denote by $C_{n+l,m}$ the unordered configuration space of $m$ points in the $(n+l)$-punctured disc. Then, we define $\tilde{C}^{-k,l}_{n+l,m}$ to be a $\Z \oplus \Z$ covering of $C_{n+l,m}$, associated to a local system $\phi^{-k,l}$. Roughly speaking, this local system counts the winding number around the first $n-k$ punctures by the variable $x$ and the winding around the last $k$ punctures by its inverse $x^{-1}$ and it is trivial around the remaining $l$ punctures. In our construction, we will use the following tools:
\begin{enumerate}
\item sequence of  versions of Lawrence representations $H^{-k,l}_{n,m}$ which are $\Z[x^{\pm 1},d^{\pm 1}]$-modules and carry a $B_n-$action
(defined from the Borel-Moore homology of $\tilde{C}^{-k,l}_{n+l,m}$)
\item sequence of dual Lawrence representations $H^{-k,l,\partial}_{n,m}$ (definition \ref{D:4})\\ 
(defined using the homology relative to the boundary of the same covering) 
\item a graded intersection pairing:
\begin{equation*}
\langle , \rangle:H^{-k,l}_{n,m} \otimes H^{-k,l,\partial}_{n,m}\rightarrow \Z[x^{\pm1}, d^{\pm1}] \ \ \ \ \ \ \ \ \ \ \ \ \ \ \ \  (\text{definition }\ref{P:3'}).
\end{equation*}
\end{enumerate}
 
{\bf Homology classes} The construction of the Lagrangians is done by drawing certain arcs in the punctured disc, and then taking their product and considering its image under the quotient to the unordered configuration space. We refer to this set of curves in the punctured disc as ``geometric support''. Also, since we work in a covering space, the lifts will be prescribed by collections of vertical curves, from the base points to the arcs, called ``paths to the base points''. The intersection pairing $\langle , \rangle$ is computed from geometric intersections in the base configuration space which are graded by the local system, using the paths to the boundary (see \eqref{eq:1}). 

{\bf Case $N>2$} In this situation, we work with the configuration space of $(n-1)(N-1)+1$ points in the $(3n-1)$-punctured disc and the local system associated to: ($k=n$; $l=n-1$). More precisely, we have the collection of $2n$ black punctures and the collection of $n-1$ green punctures, which are the maximal points of the circles. With the above procedure, for $i_1,...,i_{n-1} \in \{0,...,N-1\}$  we define the two Lagrangians and consider their lifts in the covering, which give two homology classes (section \ref{51}). For the second homology class, we consider the open arcs which start and end in the green punctures (or alternatively the circles minus their maximal points). We then consider configuration spaces on these arcs and their product leads to a well defined homology class in the covering.
\vspace{-5mm}
\begin{figure}[H]
\centering
$${\color{red} \mathscr F_{i_1,...,i_{n-1}} \in H^{-n,n-1}_{2n,(n-1)(N-1)+1}} \ \ \text{ and }\ \  {\color{dgreen} \mathscr L_{i_1,...,i_{n-1}}\in H^{-n,n-1,\partial}_{2n,(n-1)(N-1)+1}}.$$
\includegraphics[scale=0.35]{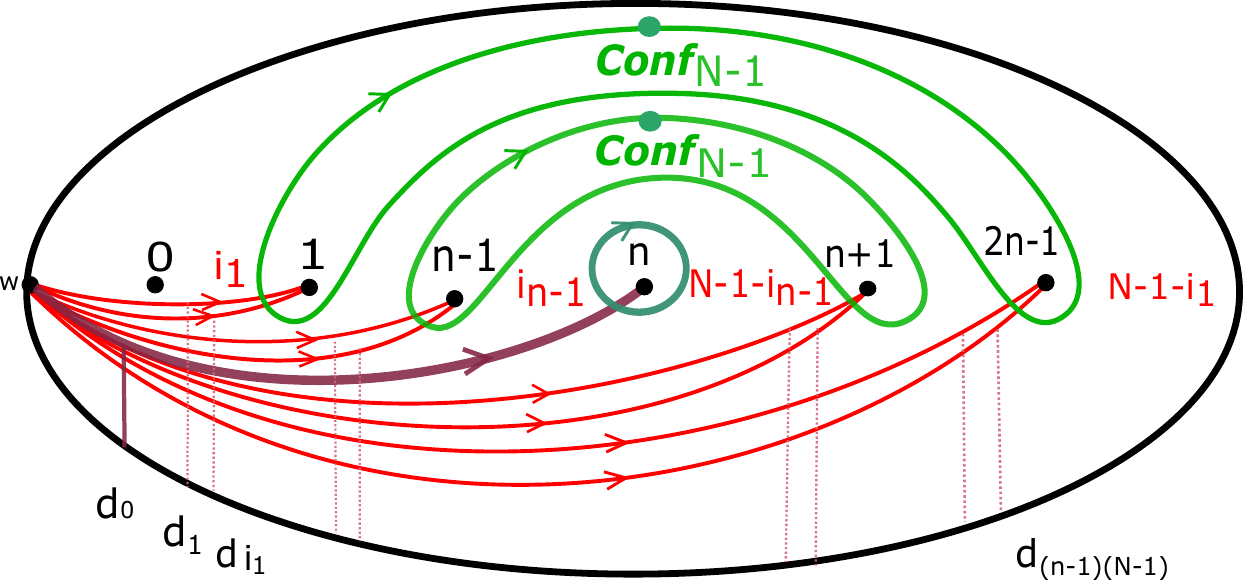}
\caption{Embedded Lagrangians: $N>2$}
\label{Picture}
\end{figure}
\vspace{-7mm}
{\bf Case $N=2$} In this case we do not remove any green punctures and consider the configuration space of $(n-1)(N-1)+1$ points in the  $2n$-punctured disc, with the local system associated to the parameters ($k=n-1$; $l=0$). For any indices $i_1,...,i_{n-1} \in \{0,1\}$  we consider the  homology classes given by the collection of red arcs and green circles from figure \ref{JAIntro} (see section \ref{52}).
\vspace{-6mm}
\begin{figure}[H]
\centering
$${\color{red} \mathscr F_{i_1,...,i_{n-1}} \in H^{-n}_{2n,(n-1)(N-1)+1}} \ \ \ \ \ \text{ and }\ \ \  \ \ \  {\color{dgreen} \mathscr L_{i_1,...,i_{n-1}}\in H^{-n,\partial}_{2n,(n-1)(N-1)+1}}.$$
\hspace{-5mm}\includegraphics[scale=0.35]{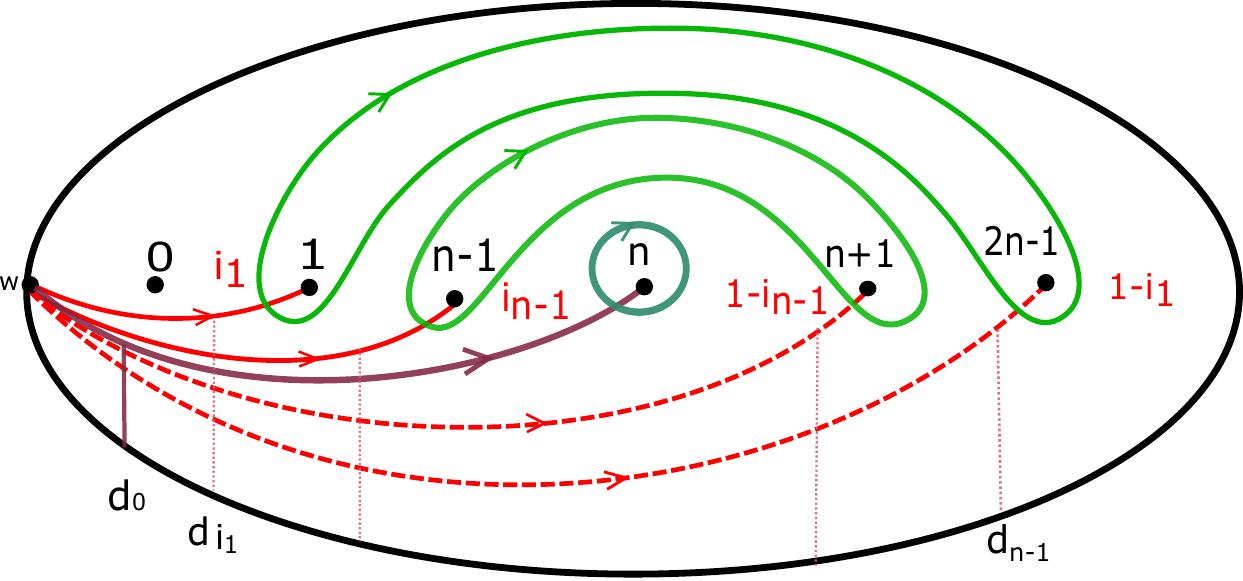}
\vspace{2mm}
\hspace{-2mm}\caption{Embedded Lagrangians: $N=2$ (Jones and Alexander polynomials)}
\label{JAIntro}
\end{figure}
\vspace{-10mm}
\subsection{Main result-Embedded state sum model}
\begin{definition}[State sum of Lagrangian intersections]

For a fixed colour $N\geqslant 2$ and $\beta_n \in B_n$, we consider the following state sum of Lagrangian intersections:
\begin{equation}\label{eq:00}
\mathscr S_N(\beta_n):=\sum_{i_1,...,i_{n-1}=0}^{N-1} \langle(\beta_{n} \cup {\mathbb I}_{n} ){ \color{red} \mathscr F_{i_1,...,i_{n-1}}}, {\color{dgreen} \mathscr L_{i_1,...,i_{n-1}}}\rangle \in \Z[x^{\pm 1},d^{\pm 1}].
\end{equation}
\end{definition}
\begin{theorem}[Unified model through a state sum of Lagrangian intersections]\label{THEOREM}
Let $L$ be an oriented link and $\beta_n \in B_n$ such that $L=\hat{\beta}_n$.  We consider the following polynomial in three variables:
\begin{equation}\label{eq:0}
\Lambda_N(\beta_n)(u,x,d):=u^{-w(\beta_n)} u^{-(n-1)} \mathscr S_N(\beta_n) \in \Z[u^{\pm1},x^{\pm 1},d^{\pm 1}].
\end{equation}
Then, $\Lambda_N$ recovers the $N^{th}$ coloured Jones and $N^{th}$ coloured Alexander invariants through certain specialisations (definition \ref{D:1''}):
\begin{equation}
\begin{aligned}
&J_N(L,q)=\Lambda_N(\beta_n)|_{\psi_{1,q,N-1}}\\
&\Phi_{N}(L,\lambda)=\Lambda_N(\beta_n)|_{\psi_{1-N,\xi_N,\lambda}}.
\end{aligned}
\end{equation}
Here, $w(\beta_n)$ is the writhe of the braid and ${\mathbb I}_n$ the trivial braid with $n$ strands.

\end{theorem}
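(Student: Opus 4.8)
The plan is to reduce the statement to the identification between quantum and homological braid group representations, and then to match the state sum term by term with the Reshetikhin--Turaev construction of the two invariants. First I would set up the common quantum source of both invariants: both $J_N$ and $\Phi_N$ arise by applying the Reshetikhin--Turaev functor associated to a suitable $U_q(sl(2))$-module to the braid $\beta_n$ together with the closure data, and evaluating the resulting endomorphism. Working over $\Z[q^{\pm 1},q^{\pm \lambda}]$ one can treat both cases simultaneously using a generic weight module $V^{\lambda}$, so that the coloured Jones invariant appears at $\lambda=N-1$ with generic $q$, and the coloured Alexander invariant appears at $q=\xi_N$ a root of unity with generic $\lambda$. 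The key first step is to rewrite this quantum invariant as a sum, over the weight-space decomposition of $(V^{\lambda})^{\otimes n}$, of matrix coefficients of the braid action $\phi(\beta_n)$, each summand indexed by a multi-index $(i_1,\dots,i_{n-1})$ recording the weight carried across each internal strand. This is where the summation over $\{0,\dots,N-1\}^{n-1}$ in $\mathscr S_N$ originates, and the doubling to $2n$ punctures together with the element $\beta_n \cup \mathbb I_n$ encodes the passage from the braid action to the knot closure.

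Next I would invoke the explicit Kohno--Martel identification from \cite{Koh, Martel}, in the refined form already used by the author in \cite{Cr3}. This gives, after applying the specialisation $\psi_{c,q,\lambda}$, an isomorphism of $B_n$-modules between the Lawrence representation $H^{-n}_{2n,(n-1)(N-1)+1}$ and the relevant weight space of $(V^{\lambda})^{\otimes n}$ with its $R$-matrix action, and dually between $H^{-n,\partial}_{2n,(n-1)(N-1)+1}$ and the dual weight space. Under this identification the homological action $(\beta_n \cup \mathbb I_n)$ intertwines with $\phi(\beta_n)$. I would then show that the two families of Lagrangians correspond to distinguished quantum vectors: the red class $\mathscr F_{i_1,\dots,i_{n-1}}$ maps to the coevaluation-type vector built from the highest weight data prescribed by the multi-index, and the green class $\mathscr L_{i_1,\dots,i_{n-1}}$ maps to the matching evaluation-type dual vector. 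This step rests on the explicit geometric description of the Lagrangians as products of curves and segments together with their prescribed lifts, read off through the local system $\phi^{-n}$.

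The heart of the argument, and the step I expect to be the main obstacle, is to prove that the graded intersection pairing $\langle\, ,\,\rangle$, after the specialisation $\psi_{c,q,\lambda}$, computes the quantum Hopf pairing between these vectors. Concretely, each geometric intersection point of the lifted Lagrangians contributes a monomial in $x$ and $d$ determined by the winding-number gradings, and one must verify that the total collection of these monomials reassembles exactly into the $q$-powers appearing in the $R$-matrix and in the evaluation morphisms of the quantum invariant. Carefully bookkeeping these contributions — in particular controlling the interaction of the lifts across the doubling region and the accompanying signs — is the delicate point; the choice of covering $\tilde{C}^{-n}_{2n,m}$, with windings counted by $x$ around the first $n$ punctures and $x^{-1}$ around the last $n$, is designed precisely so that this bookkeeping closes up.

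Finally I would assemble the matched summands. Summing over all multi-indices $(i_1,\dots,i_{n-1})$ recovers the full quantum trace, while the prefactor $u^{-w(\beta_n)}u^{n-1}x^{-n}$ removes the framing anomaly through the writhe $w(\beta_n)$ and supplies the pivotal and quantum-dimension normalisation that converts the framed quantum trace into a genuine knot invariant. Specialising the resulting three-variable identity at $\psi_{1,q,N-1}$ then places us in the generic-$q$, finite-dimensional regime and yields $J_N(L,q)$, whereas specialising at $\psi_{1-N,\xi_N,\lambda}$ places us in the root-of-unity weight-module regime and yields $\Phi_N(L,\lambda)$, which establishes the two stated equalities.
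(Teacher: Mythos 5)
Your overall strategy --- translating everything back to the Reshetikhin--Turaev side via the Kohno--Martel dictionary and matching the state sum with weight-space matrix coefficients --- is not the route the paper takes, and as written it has a genuine gap at exactly the point you yourself flag as ``the heart of the argument''. The identification of \cite{Koh}, \cite{Martel}, in the form used in \cite{Cr3}, relates the quantum braid representation to the Lawrence representation $H_{2n-1,(n-1)(N-1)}$ built from the \emph{standard} local system $\phi^{0}$ on the $(2n-1)$-punctured disc; it says nothing about the groups $H^{-n}_{2n,(n-1)(N-1)+1}$ in which $\mathscr F_{i_1,\dots,i_{n-1}}$ and $\mathscr L_{i_1,\dots,i_{n-1}}$ actually live, where there is an extra puncture, one extra configuration point, and the sign-twisted local system $\phi^{-n}$. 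Asserting that the same dictionary identifies these new classes with coevaluation/evaluation vectors and the new graded intersection pairing with the quantum Hopf pairing is precisely the content that needs proving --- it is the entire novelty of the theorem --- and your proposal defers it (``one must verify that the monomials reassemble\dots'') rather than supplying an argument.

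The paper instead proves the purely topological identity $\mathscr S_N(\beta_n)=x^{n}\mathscr I_N(\beta_n)$ (Lemma \ref{L:2}) by an explicit chain of moves: re-encoding the coefficient $d^{\sum_k i_k}$ as a change of lift using the extra puncture, deleting the $n$-th puncture at the cost of a single transverse intersection point with trivial monodromy, untwisting $\phi^{-n}$ back to $\phi^{0}$ by trading the embedded circles for figure eights, and finally checking that the new dual class pairs as a delta function on the symmetric index set $E^{N,symm}$ against the code-sequence basis, which is the operative criterion (Corollary \ref{R:1}) for producing the invariants. It then simply quotes Theorem \ref{T:unified}. To salvage your approach you would have to either reprove the quantum--homological correspondence from scratch in the doubled, twisted setting, or insert the reduction steps above; without one of these the central step of your proof is an unsupported assertion. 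A smaller inaccuracy: the factor $x^{-n}$ in $\Lambda_N$ is not a pivotal or quantum-dimension normalisation but exactly the compensation for the factor $x^{n}$ produced by the extra segment and its lift in Lemma \ref{L:2}, while $u$ only uniformises the writhe correction already present in Theorem \ref{T:unified}.
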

\begin{remark}[Geometric meaning of the variables]
In this paper, we notice what the geometric role of each variable from $\Lambda_N(u,x,d)$ is:
\begin{itemize}
\item[$\bullet$]$x$ encodes the linking number with the link, as a winding number around punctures
\item[$\bullet$]$d$ on the other hand has an extra datum, encoding the twisting of the Lagrangians in the configuration space
\item[$\bullet$]$u$ captures the major difference between the semisimplicity/ non-semisimplicity of the invariant, which comes as a power of the variable $x$, so a power of the linking number with the link. 
\end{itemize}
\end{remark}
\vspace{-3 mm}
\subsection{Computable model via immersed Lagrangians} Now we present an immersed model. This differs from the one in \cite{Cr3} by the fact that it captures geometrically the coefficients of the first family of classes and also uses paths to the base points that are much simpler. This is important from the computational viewpoint. In this case we do not  consider two cases, the definition is the same for all $N\geqslant2$.
\begin{definition}[Classes suitable for computations] We consider the classes given by the geometric supports and the paths to the base points from figure \ref{PictureIM}:
\end{definition}
\vspace{-7mm}
\begin{figure}[H]
\centering
$${\color{red} \mathscr F^{I}_{i_1,...,i_{n-1}} \in H^{0}_{2n,(n-1)(N-1)+1}} \ \ \ \ \ \ \  \text{ and }\ \ \ \ \ \ \ \  {\color{dgreen} \mathscr L^{I}_{i_1,...,i_{n-1}}\in H^{0,\partial}_{2n,(n-1)(N-1)+1}}.$$
{\includegraphics[scale=0.35]{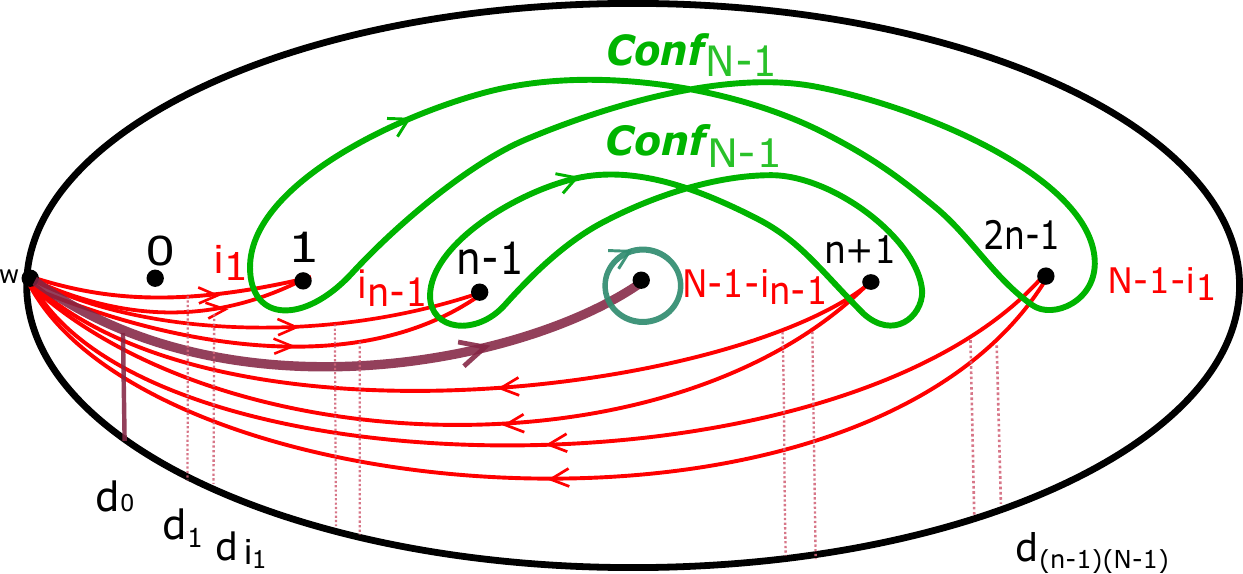}}
\vspace{2mm}
\caption{Immersed Lagrangians-computable state sum model}
\label{PictureIM}
\end{figure}
\vspace{-8mm}
\begin{theorem}[Unified model via states of immersed Lagrangian intersections]\label{THEOREMIM} We consider the following polynomial in three variables:
\begin{equation}\label{eq:0'}
\begin{aligned}
\Omega_N(\beta_n)&:=u^{-w(\beta_n)} u^{-(n-1)}\cdot\\
&\sum_{i_1,...,i_{n-1}=0}^{N-1} \langle(\beta_{n} \cup {\mathbb I}_{n} ){ \color{black} \mathscr F^{I}_{i_1,...,i_{n-1}}}, {\color{black} \mathscr L^{I}_{i_1,...,i_{n-1}}}\rangle \in \Z[u^{\pm},x^{\pm 1},d^{\pm 1}].
\end{aligned}
\end{equation}
Then, $\Omega_N$ recovers the $N^{th}$ coloured Jones and coloured Alexander polynomials:
\begin{equation}
\begin{aligned}
&J_N(L,q)=\Omega_N(\beta_n)|_{\psi_{1,q,N-1}}\\
&\Phi_{N}(L,\lambda)=\Omega_N(\beta_n)|_{\psi_{1-N,\xi_N,\lambda}}.
\end{aligned}
\end{equation}
\end{theorem}
We conclude that for any $N \in \N$ we have two intersection models: the first one  which is constructed via state sums of embedded Lagrangian intersections $\Lambda_N(\beta_n)$ and a second one, defined using states of immersed Lagrangian intersections $\Omega_N(\beta_n)$, each of which recoves the $N^{th}$ coloured Jones and the $N^{th}$ coloured Alexander polynomials via different specialisations of the coefficients.
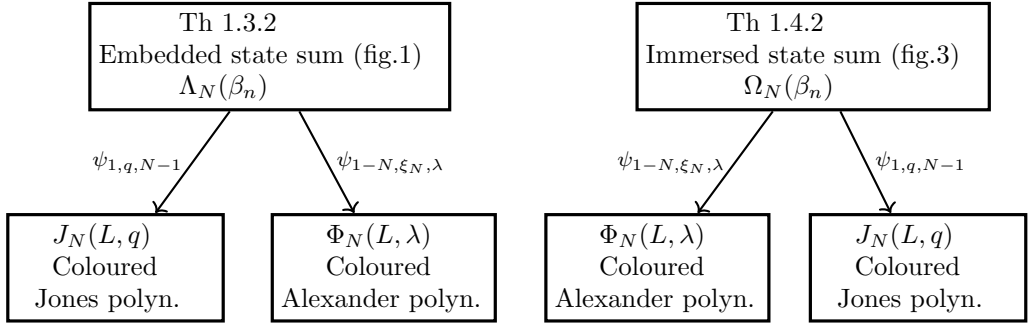
\begin{figure}[H]
\begin{center}
\begin{tikzpicture}
[x=1.2mm,y=1.4mm]

% Nodes of the diagram
\node(I)[draw,rectangle,anchor=west,very thick,text width=4.5cm,minimum height=1cm] at (-35,10) 
{\phantom \ \ \ \ \ \ \ \ \ Th \ref{THEOREM}\\
Embedded state sum (fig.\ref{Picture}) \\
\phantom  \ \ \ \ \ \ \ \ \ $\Lambda_N(\beta_n)$};
\node(E)[draw,rectangle,anchor=west,very thick,text width=4.4cm,minimum height=1cm] at (25,10) 
{\phantom \ \ \ \ \ \ \ \ \ Th \ref{THEOREMIM}\\
Immersed state sum (fig.\ref{PictureIM})\\
\phantom  \ \ $ \ \ \ \ \ \ \ \ \ \Omega_N(\beta_n)$};
\node(J)[draw,rectangle,anchor=west,very thick,text width=2.6cm,minimum height=0.1cm] at (-44,-10) 
{$ \ \ \ \ J_N(L,q)$\\
 \phantom \ \ \ \ Coloured \\
 \phantom \ \ Jones polyn.};
\node(A)[draw,rectangle,anchor=west,very thick,text width=2.7cm,minimum height=0.1cm] at (-15,-10) 
{$ \ \ \ \ \ \Phi_N(L,\lambda)$\\
\phantom \ \ \ \ \ Coloured\\
Alexander polyn.};

\node(J')[draw,rectangle,anchor=west,very thick,text width=2.6cm,minimum height=0.1cm] at (44,-10) 
{$ \ \ \ \ J_N(L,q)$\\
 \phantom \ \ \ \ Coloured \\
 \phantom \ \ Jones polyn.};
\node(A')[draw,rectangle,anchor=west,very thick,text width=2.7cm,minimum height=0.1cm] at (15,-10) 
{$ \ \ \ \ \ \Phi_N(L,\lambda)$\\
\phantom \ \ \ \ \ Coloured\\
Alexander polyn.};
\draw[->,thick] (I) to node[above,font=\footnotesize,left]{$\psi_{1,q,N-1}$} (J);
\draw[->,thick] (I) to node[above,font=\footnotesize,right]{$\psi_{1-N,\xi_N,\lambda}$} (A);
\draw[->,thick] (E) to node[above,font=\footnotesize,right]{$\psi_{1,q,N-1}$} (J');
\draw[->,thick] (E) to node[above,font=\footnotesize,left]{$\psi_{1-N,\xi_N,\lambda}$} (A');
\end{tikzpicture}
\caption{State sum models}
\label{introtikz}
\end{center} 
\end{figure}
\vspace{-10mm}
\subsection{Intersection models for Jones and Alexander polynomials}
For the particular case where $N=2$ we get both Jones and Alexander polynomials from the same picture: they come from an intersection pairing between circles and forks in the configuration space, specialised in two ways. Also, these two invariants come from a pairing between noodles and forks in the configuration space.
\begin{corollary}[Jones and Alexander polynomials from the same picture]\label{C:3}
\begin{equation}\label{eqC:3}
\begin{aligned}
&J(L,q)= \Lambda_2(\beta_n)|_{u=q;x=q^{2}, d=q^{-2}} \hspace{10mm} J(L,q)= \Omega_2(\beta_n)|_{u=q;x=q^{2}, d=q^{-2}}\\
&\Delta(L,t)=\Lambda_2(\beta_n) |_{u=t^{-\frac{1}{2}};x=t;d=-1} \hspace{9mm} \Delta(L,t)=\Omega_2(\beta_n) |_{u=t^{-\frac{1}{2}};x=t;d=-1}.
\end{aligned}
\end{equation}
\end{corollary}

\begin{remark}[Computations]
The immersed model for the Jones polynomial has the same underlying geometric support as Bigelow's model, but the homology classes are different. For example, we show that for the trefoil knot our model leads to an easy computation (figure \ref{Timmersed}), whereas Bigelow mentions that the model from \cite{Big} needs a careful check for the trefoil knot. 

In section \ref{E:Example} we show a computation of the Jones and Alexander polynomials of the trefoil knot using the embedded picture \ref{Tembedded} as well as the immersed picture \ref{Timmersed}.
\end{remark}
 \subsection{Framework for categorifications}
%The main motivation of this work concern geometrical categorifications for these quantum invariants. The embedded Lagrangians from above and their explicit form provide a proper context for this studying this question.

An active research area concerns relations between categorifications given by Khovanov homology and Heegaard Floer homology. In \cite{Ras} Rasmussen predicted a spectral sequence from Khovanov homology to knot Floer homology. This was proved recently by Dowlin (\cite{D}). However, there are still open questions about the geometric nature of this spectral sequence and relations between categorifications of these invariants. Also, Kotelskiy-Watson-Zibrowius (\cite{KWZ}) showed that Khovanov homology can be obtained from a construction using immersed curves. It would be interesting to compare the intersection model from Theorem \ref{THEOREMIM} with the Kotelskiy-Watson-Zibrowius’ immersed curve formula. Also, it would be interesting to investigate for Jones polynomial relations between the Lagrangians defined by Seidel-Smith and Manolescu (\cite{SM}, \cite{M1}, which is known to give a geometrical categorification) and our Lagrangians.

The two explicit models from this paper set up a {\em framework for investigating categorification problems for the whole families of coloured Alexander polynomials and coloured Jones polynomials}. Moreover, they provide a tool for understanding possible relations between them.

We believe that the embedded model $\Lambda_2(\beta_n)$ (Th \ref{THEOREM}) leads to knot Floer homology of the closure. On the other hand, it seems that the immersed model is better suited for the Jones polynomial.
We expect that the immersed intersection model $\Omega_2(\beta_n)$ (Th \ref{THEOREMIM}) and the embedded one $\Lambda_2(\beta_n)$ (Th \ref{THEOREM}) will lead to a spectral sequence between categorifications of Jones polynomial and knot Floer homology.

\subsection{Further development}
These topological models lead to further developments in \cite{Cr0}, where we present a topological model for the level $\mathcal N$ Witten-Reshetikhin-Turaev invariants for $3$-manifolds, as state sums of Lagrangian intersections in a fixed configuration space in the punctured disc. Further on, in \cite{Cr} we use the current paper in order to prove that the colour $N$ coloured Jones and Alexander invariants are both specialisations of a graded intersection between two fixed Lagrangians in a symmetric power of the surface.

Pursuing this line, we are interested in studying geometrically the behaviour of $\Lambda_N(\beta_{n})$ and $\Omega_N(\beta_{n})$. Gukov and Manolescu constructed in \cite{GM} a two variable power series from knot complements, and conjectured that it is a knot invariant recovering the loop expansion of $J_N$. It would be interesting to study asymptotic limits of $\Lambda_N(\beta_{n})$ and to investigate possible relations between this model and the Gukov-Manolescu power series. In \cite{W}, Willets defined algebraically a knot invariant which recovers the $N^{th}$ coloured Jones polynomial and the $N^{th}$ coloured Alexander polynomial divided by the Alexander polynomial. His formula works for knots (not for links) but it is a knot invariant. 
Our models recover exactly the $N^{th}$ coloured Jones and the $N^{th}$ coloured Alexander polynomials and work for any link. It would be interesting to compare the two results.

\subsection*{Structure of the paper} In Section \ref{3} we discuss homological tools, which are given by versions of the Lawrence representation. In Section \ref{4} we present the homological model with immersed classes from \cite{Cr3}. Section \ref{5} is devoted to the construction of the embedded Lagrangians that will be the good building blocks for our model. In Section \ref{6}, we investigate the two state sums of Lagrangian intersections and prove Theorem \ref{THEOREM}. Section \ref{7} concerns the state model via immersed Lagrangians presented in Theorem \ref{THEOREMIM}. In the last part, Section \ref{8}, we give a useful form of the intersection model and remark that the special case, which corresponds to $N=2$, provides unified models for the Jones and Alexander polynomials. We compute these for the trefoil knot.
\subsection*{Acknowledgements} 
 This paper was prepared at the University of Oxford, and I acknowledge the support of the European Research Council (ERC) under the European Union's Horizon 2020 research and innovation programme (grant agreement No 674978). For the revised version of this paper I would like to acknowledge the support of the Swiss NSF grant $200020-200400$ and SwissMAP NCCR 51NF40-141869 “The Mathematics of Physics”.
 
\section{Notations}
Throughout the paper, we will use certain specialisations of coefficients for  homology modules. We will use the following notations.
\begin{notation}\label{N:spec} 
Suppose that $R$ is a ring and $M$ an $R$-module with a fixed basis $\mathscr B$.
We consider $S$ to be another ring and suppose that we fix a specialisation of the coefficients, given by a morphism between rings:
$$\psi: R \rightarrow S.$$
Then, the specialisation of the module $M$ by the morphism $\psi$ is defined to be the following $S$-module: $$M|_{\psi}:=M \otimes_{R} S.$$ It will have a basis given by:
$$\mathscr B_{M|_{\psi}}:=\mathscr B \otimes_{R}1 \in M|_{\psi}. $$
\end{notation}
\begin{definition}[Specialisations]\label{D:1''}   For $c \in \Z$, we will use the following morphism:
\begin{equation}
\begin{aligned}
&\psi_{c,q,\lambda}: \Z[u^{\pm 1},x^{\pm1},d^{\pm1}]\rightarrow \Z[q^{\pm 1},q^{\pm \lambda}]\\
& \psi_{c,q,\lambda}(u)= q^{c \lambda}; \ \ \psi_{c,q,\lambda}(x)= \qs^{2 \lambda}; \ \ \psi_{c,q,\lambda}(d)=\qs^{-2}.
\end{aligned}
\end{equation}
\end{definition}
\begin{definition}[Specialisation of coefficients]\label{D:1}   For $\lambda \in \C$, let us consider the following change of coefficients:
\begin{equation*}
\begin{aligned}
&\psi_{q,\lambda}: \Z[x^{\pm1},d^{\pm1}]\rightarrow \Z[q^{\pm 1},q^{\pm \lambda}]\\
&\psi_{q,\lambda}(x)= \qs^{2 \lambda}; \ \ \psi_{\lambda}(d)=\qs^{-2}.
\end{aligned}
\end{equation*}
\end{definition}
Throughout the following sections, we will use the roots of unity $\xi_N=e^{\frac{2 \pi i}{2N}}$.
\section{Lawrence representation and intersection pairings}\label{3}
In this part we present the topological tools that we need for the intersection models. We will use the Borel-Moore homology of certain coverings of the configuration spaces in the punctured disc. We start with two natural numbers $n$ and $m$ and denote by $\mathscr D_n$ the two dimensional disc with boundary, with $n$ punctures: $$\mathscr D_n=\mathbb D^2 \setminus \{1,...,n\}.$$
Moreover, let $S^{-}\subseteq \partial \mathbb D^2$ be the semicircle in its boundary given by points with negative $x$-coordinate. We denote the unordered configuration space in this punctured disc as:
$$C_{n,m}:=\{ (x_1,...,x_m)\in (\mathscr D_n)^{\times m} \mid x_i \neq x_j, \forall \ 1 \leq i < j\leq m\} /Sym_m. $$
(here $Sym_m$ is the $m^{th}$ symmetric group)

Now, let us fix $d_1,..d_m \in \partial\mathscr D_n$ and ${\bf d}=(d_1,...,d_m)$ the associated base point in the configuration space.
\subsection{Local system}

\begin{remark}[Homology]
For $m \geq 2$, we use the abelianisation map: 
\begin{equation*}
\begin{aligned}
\rho: \pi_1(C_{n,m})\rightarrow H_1\left( C_{n,m}\right) \simeq & \ \ \Z^{n} \ \oplus \ \Z \ \ \\
& \langle \rho(\sigma_i) \rangle \  \langle \rho(\delta) \rangle,  \ \ \ \ \ {i\in \{1,...,n\}}.
\end{aligned}
\end{equation*}
Here, $\sigma_i\in \pi_1(C_{n,m})$ is the loop in the configuration space based in $\bf d$ whose first component is going on a loop in the punctured disk around the $i^{th}$ puncture and all the other components are fixed. The generator $\delta \in \pi_1(C_{n,m})$ is the loop whose first two components swap the points $d_1$ and $d_2$ anticlockwise and the other components are $(m-2)$ constant points ($d_3$,...,$d_{n}$) as in figure \ref{fig2}.  

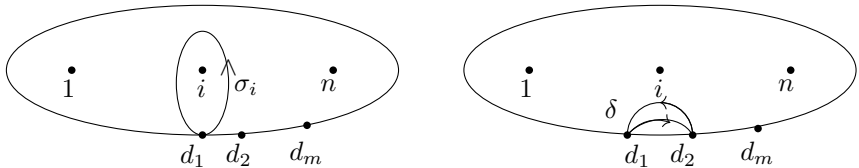
\begin{figure}[H]
\begin{tikzpicture}
[scale=4.3/5]
\foreach \x/\y in {0/2,2/2,4/2,2/1,2.6/1,3.6/1.15} {\node at (\x,\y) [circle,fill,inner sep=1pt] {};}
\node at (0.2,2) [anchor=north east] {$1$};
\node at (2.2,2) [anchor=north east] {$i$};
\node at (4.2,2) [anchor=north east] {$n$};
\node at (3,2) [anchor=north east] {$\sigma_i$};
\node at (2.2,1) [anchor=north east] {$d_1$};
\node at (2.9,1.02) [anchor=north east] {$d_2$};
\node at (4,1.05) [anchor=north east] {$d_m$};
\node at (2.68,2.3) [anchor=north east] {$\wedge$};
\draw (2,1.8) ellipse (0.4cm and 0.8cm);
\draw (2,2) ellipse (3cm and 1cm);
\foreach \x/\y in {7/2,9/2,11/2,8.5/1,9.5/1,10.5/1.10} {\node at (\x,\y) [circle,fill,inner sep=1pt] {};}
\node at (7.2,2) [anchor=north east] {$1$};
\node at (9.2,2) [anchor=north east] {$i$};
\node at (11.2,2) [anchor=north east] {$n$};
\node at (9,1) [anchor=north east] {$d_1$};
\node at (9.7,1.02) [anchor=north east] {$d_2$};
\node at (10.9,1.05) [anchor=north east] {$d_m$};
\node at (8.5,1.7) [anchor=north east] {$\delta$};
\draw (9,2) ellipse (3cm and 1cm);
%\draw[->, color=blue, very thick]             (12.2,1)     to[in=30,out=30] node[yshift=-3mm,font=\small]{}  (12.5,1.02);
\draw (9.5,1)  arc[radius = 5mm, start angle= 0, end angle= 180];
\draw [->](9.5,1)  arc[radius = 5mm, start angle= 0, end angle= 90];
\draw (8.5,1) to[out=50,in=120] (9.5,1);
\draw [->](8.5,1) to[out=50,in=160] (9.16,1.19);
\end{tikzpicture}
\caption{Local system}
\label{fig2}
\end{figure}
\end{remark}
\begin{definition}[First Local system] Let us fix a number $0 \leq k \leq n$. Then, we consider $a:\Z^n\oplus \Z\rightarrow \Z \oplus \Z$ to be the augmentation map given by:
$$a(x_1,...,x_n,y)=(x_1+...+x_{n-k}-x_{n-k+1}-...-x_{n},y).$$
Using this, we define the following local system:
\begin{equation}\label{eq:23}
\begin{aligned}
\phi^{-k}: \pi_1(C_{n,m}) \rightarrow \ & \Z \oplus \Z\\ 
 &\hspace{-1mm} \langle x \rangle \ \langle d' \rangle\\
\phi^{-k}= a \circ \rho. \ \ \ \ \ \ \ \ 
\end{aligned}
\end{equation}
\end{definition}
We will consider the homology of a certain covering of this configuration space. Our aim is to define and use pairings between homology classes given by embedded Lagrangian submanifolds, rather than the immersed ones given in \cite{Cr3}. For this, we will use geometric supports given by circles rather than figure eights. When we want to do this change there is a subtlety that appears, related to the existence of lifts of such submanifolds to the covering. In order to have such lifts, in this set-up, we add $l$ extra punctures to our punctured disc. Later on, they will have the role of removing the maximal points of the circles in order to get the well defined homology classes from section \ref{5}.

Let $l \in \N$ and consider the punctured disc $\mathscr{D}_{n+l}$ where we fix $l$ green punctures and the other $n$ black punctures as in figure \ref{fig4}. Then, we have an inclusion between the punctured discs: $$ \mathscr{D}_{n+l}\hookrightarrow \mathscr{D}_{n}$$ given by forgetting the set of $l$ green punctures. 
This induces an inclusion between the configuration spaces:
$$\iota^l: C_{n+l,m}\hookrightarrow C_{n,m},$$
which induces a map between their fundamental groups:
\begin{equation}
\tilde{\iota}^l: \pi_1(C_{n+l,m}) \rightarrow \pi_1(C_{n,m}).
\end{equation}
\begin{definition}[Second Local system] Then, we consider the local system coming from $\phi^{-k}$, via this map, and denote it as below:
\begin{equation}\label{eq:23}
\begin{aligned}
\phi^{-k,l}: \pi_1(C_{n+l,m}) \rightarrow \ & \Z \oplus \Z\\ 
 &\hspace{-1mm} \langle x \rangle \ \langle d' \rangle\\
\phi^{-k,l}= \phi^{-k} \circ \tilde{\iota}^l.
\end{aligned}
\end{equation}
\end{definition}

\begin{definition}[Covering of the configuration space]\label{D:12}
Let $\tilde{C}^{-k,l}_{n+l,m}$ be the covering of $C_{n+l,m}$ associated to the local system $\phi^{-k,l}$. 
\end{definition}
Now, we discuss the homology of this covering space. Let us fix a point $w \in S^{-} \subseteq \partial \mathscr D_n$. We will work with the part of the Borel-Moore homology of this covering which comes from the corresponding Borel-Moore homology of the base space, twisted by the local system. 

For the following definitions, we denote by $C^{-}$ the part of the boundary of $C_{n+l,m}$ represented by configurations containing a point in $S^{-}$ . Also, by $P^{-}$ we denote the part of the boundary of $\tilde{C}^{-k,l}_{n+l,m}$ represented by the fiber over $C^{-}$.
From now on, we will use the variable $d:=-d'$.

\begin{notation}\label{T2}
For the computational part, we will change slightly the infinity part of the configuration space. The details of this construction are presented in \cite{CrM} (see Remark 7.5, page 23), in the case where the underlying surface is the closed $2$-disc minus half of its boundary and minus $l$ points and $n$ open discs (with pairwise disjoint closures) from its interior. We denote the following:
\begin{equation*}
\begin{aligned}
&\bullet H^{\text{lf},\infty,-}_m(\tilde{C}^{-k}_{n+l,m},P^{-}; \Z) \text{ the homology relative to the infinity part given by }\\
& \text{ the open boundary of the covering of the configuration space consisting }\\
&\text{ in the configurations that project in the base space to a multipoint which }\\
& \text { touches a {\bf black} puncture from the punctured disc and relative to the } \\
& \text { boundary part defined by } P^{-}.\\
&\bullet H^{lf, \Delta}_{m}(\tilde{C}^{-k,l}_{n+l,m}, \partial; \Z) \text{ the homolgy relative to the boundary of } \tilde{C}^{-k,l}_{n,m} \\
& \text{ which is not in } P^{-} \text{and Borel-Moore}\\
& \text{ with respect to collisions of points in the configuration space}\\
&\text {and the open boundary where a point touches a {\bf green} puncture}\\
&\text { from the punctured disc}.
\end{aligned}
\end{equation*}
\end{notation}
 We would like to emphasise that the Borel-Moore homology of a covering space is generally different than the twisted Borel-Moore homology of the base space. For our purpose, we want to work with the homology of the covering space rather than the twisted homology of the base space, and for this we will use the the following results. 
%we use the homology classes presented above, seen in the modified version of the homology $H_{n,m}$. However, following \cite{CrM}, all relations between the homology classes still hold in this version of the homology. 
\begin{proposition}[\cite{CrM} Theorem E]\label{P:5}
There are natural injective maps:
\begin{equation}
\begin{aligned}
& \iota: H^{\text{lf},\infty,-}_m(C_{n+l,m}, C^{-}; \mathscr L_{\phi^{-k,l}})\rightarrow H^{\text{lf},\infty,-}_m(\tilde{C}^{-k,l}_{n+l,m}, P^{-1};\Z)\\
& \iota^{\partial}:H^{\text{lf},\Delta}_m(C_{n+l,m}, \partial; \mathscr L_{\phi^{-k,l}})\rightarrow H^{\text{lf},\Delta}_m(\tilde{C}^{-k,l}_{n+l,m},\partial;\Z).
\end{aligned}
\end{equation}
where $\mathscr L_{\phi^{-k}}$ is the rank $1$ local system associated to $\phi^{-k,l}$(\cite{CrM} Definition 2.7).
\end{proposition}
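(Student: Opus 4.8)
The statement is the Borel--Moore, relative incarnation of the classical dictionary between homology of the base $C_{n,m}$ with coefficients in a rank-one local system and homology of the associated regular cover; the only genuine content is the injectivity, which is exactly where the noncompactness of $C_{n,m}$ intervenes. The plan is to produce $\iota$ and $\iota^{\partial}$ at the level of locally finite chains by an explicit lifting construction, verify that they are chain maps respecting the prescribed relative structures, and then isolate and settle injectivity. First I would fix a locally finite cell (or handle) decomposition of $C_{n,m}$ compatible with the decomposition of the boundary into the piece $C^{-}$ coming from $S^{-}$, the collision piece governing the $\Delta$-homology, and the infinity piece. For each cell $\sigma$ I choose once and for all a lift $\tilde\sigma$ in $\tilde{C}^{-k}_{n,m}$; since the covering is regular with deck group $\Z\oplus\Z$, the lifts of $\sigma$ form the orbit $\{g\cdot\tilde\sigma\}_{g\in\Z\oplus\Z}$, and the fibre of $\mathscr L_{\phi_k}$ is the free rank-one module $\Z[x^{\pm1},d^{\pm1}]=\Z[\Z\oplus\Z]$ on which a loop acts through $\phi^{-k}$.

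With these choices I would define $\iota$ on a twisted chain $\sum_\sigma p_\sigma\,\sigma$, with $p_\sigma=\sum_g p_\sigma[g]\,g\in\Z[x^{\pm1},d^{\pm1}]$, by sending each cell to the $\Z$-combination of its lifts prescribed by its coefficient, $p_\sigma\,\sigma\mapsto\sum_g p_\sigma[g]\,(g\cdot\tilde\sigma)$. Because every $p_\sigma$ is a finite Laurent polynomial and the base decomposition is locally finite, the image is a locally finite $\Z$-chain on the cover, and the monodromy of $\mathscr L_{\phi_k}$ is precisely the bookkeeping that makes lifts of adjacent cells differ by the correct deck transformations, so the assignment commutes with the differentials. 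Next I would check compatibility with the boundary conditions, which is what separates $\iota$ from $\iota^{\partial}$: by construction $P^{-}$ is the full preimage of $C^{-}$, and the collision and infinity strata upstairs are the full preimages of the corresponding strata downstairs, so the lifting carries relative cycles to relative cycles and relative boundaries to relative boundaries for each of the two relative complexes. Passing to homology then yields the two $\Z[x^{\pm1},d^{\pm1}]$-linear maps in the statement.

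The hard part will be injectivity, and this is where Borel--Moore homology on the open manifold $C_{n,m}$ obstructs $\iota$ and $\iota^{\partial}$ from being isomorphisms: locally finite chains on the cover form, as a $\Z[\Z\oplus\Z]$-module, the completed group ring $\widehat{\Z[\Z\oplus\Z]}$ rather than $\Z[\Z\oplus\Z]$ itself, so a Borel--Moore cycle upstairs can carry coefficients that are genuine infinite series over the deck group. Consequently the maps cannot be surjective, and one must rule out that a polynomial-coefficient class becomes null-homologous upstairs. Conceptually, the covering projection $p$ induces a pushforward $p_*$ on locally finite chains with $p_*\circ\iota$ equal to the change-of-coefficients map attached to the inclusion $\Z[\Z\oplus\Z]\hookrightarrow\widehat{\Z[\Z\oplus\Z]}$, so injectivity of $\iota$ reduces to injectivity of this completion map on the relevant homology.

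The route I expect to be decisive, matching the strategy of \cite{CrM}, is to argue through an explicit geometric basis. I would produce the standard free $\Z[x^{\pm1},d^{\pm1}]$-basis of the twisted Borel--Moore homology in the middle degree, given by the product-of-arcs (multifork) cycles indexed by the admissible multidegrees, compute their images under $\iota$, and pair those images against the dual classes of $H^{\mathrm{lf},\Delta}_m$ via the graded intersection form valued in $\Z[x^{\pm1},d^{\pm1}]$. Nondegeneracy of the resulting Gram matrix over $\Z[x^{\pm1},d^{\pm1}]$ forces the lifted cycles to be $\Z[x^{\pm1},d^{\pm1}]$-linearly independent, hence $\iota$ injective with image the free submodule they span, and the symmetric computation gives injectivity of $\iota^{\partial}$. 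I expect the real labour to be combinatorial rather than homotopy-theoretic: tracking the three boundary strata carefully enough that the lifting is honestly a chain map of the prescribed relative complexes, and verifying that the intersection pairing genuinely detects the lifted basis.
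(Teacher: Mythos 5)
You should first be aware that the paper you are working against contains no proof of this proposition at all: it is imported verbatim from the companion paper \cite{CrM} (``The details of this construction are presented in \cite{CrM}''), so there is no internal argument to compare your proposal with line by line. Judged on its own terms, your outline is the standard one for this circle of ideas (Lawrence, Bigelow, Martel, Anghel--Palmer): define $\iota$ and $\iota^{\partial}$ by lifting locally finite twisted chains, observe that the failure of the classical $H_*(X;\Z[G])\cong H_*(\tilde X;\Z)$ dictionary in the Borel--Moore setting is a completion-of-coefficients phenomenon (so surjectivity fails and injectivity is the only real content), and then detect injectivity by pairing the images of an explicit multifork basis against dual classes and checking that the Gram matrix has trivial kernel over $\Z[x^{\pm1},d^{\pm1}]$. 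This is consistent with what \cite{CrM} is advertised as doing, and the logic ``basis of the source $+$ a pairing separating the images $\Rightarrow$ injectivity'' is correct as stated.

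The genuine gap is that the two inputs carrying all the weight are only asserted, and one of them hides a circularity risk in the context of this paper. First, the claim that the multifork classes form a \emph{free basis} of $H^{\text{lf},\infty,-}_m(C_{n,m},C^{-};\mathscr L_{\phi_k})$ (and the dual classes a basis of the $\Delta$-relative group) is itself the main computation of \cite{CrM}; without it your Gram-matrix argument only shows that $\iota$ is injective on the submodule spanned by the multiforks, not on the whole source. You should either prove freeness (e.g.\ via a CW/Morse decomposition of the one-point compactification adapted to the three boundary strata, which is nontrivial precisely because of the collision and puncture strata) or state it explicitly as the input you are quoting. Second, the $\Z[x^{\pm1},d^{\pm1}]$-valued intersection pairing you invoke must be constructed \emph{upstairs}, on $H^{\text{lf},\infty,-}_m(\tilde{C}^{-k}_{n,m},P^{-};\Z)\otimes H^{\text{lf},\Delta}_m(\tilde{C}^{-k}_{n,m},\partial;\Z)$, before and independently of the proposition: in this paper the pairing of Proposition \ref{P:3'} is only defined on $H^{-k}_{n,m}\otimes H^{\partial,-k}_{n,m}$, which are by definition the \emph{images} of $\iota$ and $\iota^{\partial}$, so appealing to it to prove injectivity of those very maps would be circular unless you first verify that the sum over deck transformations in the pairing is finite for arbitrary classes on the cover (which holds because a Borel--Moore cycle meets a boundary-relative cycle in a compact set, but this needs saying). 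With those two points supplied, your argument closes; without them it is a correct reduction rather than a proof.
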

In the following sections, we use the parts of these homologies of the covering which come from the twisted homology of the base configuration space. 
\subsection{Lawrence representation} 

\begin{definition}[Homology groups]\label{D:4}
Based on Proposition \ref{P:5}, we introduce the following notations.
\begin{enumerate}
 \item[$\bullet$] Let $H^{-k,l}_{n,m}\subseteq H^{\text{lf},\infty,-}_m(\tilde{C}^{-k,l}_{n,m}, P^{-1};\Z)$ be the image of the map $\iota$.
 \item[$\bullet$] Also $H^{-k,l,\partial}_{n,m} \subseteq H^{\text{lf},\Delta}_m(\tilde{C}^{-k,l}_{n,m},\partial;\Z)$ will be the image of the map $\iota^{\partial}$.
\end{enumerate}
\end{definition}
Even if the definition of these homology groups is rather subtle, we will work with very explicit classes given by submanifolds in the configuration space, which we introduce in the next section.

Now we present the definition of the version of the homological Lawrence representation of the braid groups which uses the above set-up. 
\begin{convention}[Parameter which is zero] If one of the parameter $l$ or $k$ is zero, then we will remove it from the indices of the homology groups:
$$H^{-k,l}_{n,m}, H^{-k,l,\partial}_{n,m}.$$
\end{convention}
\begin{notation}[$B_n$ action] We will use that the braid group $B_{n+l}$ is the mapping class group of the punctured disc $\mathscr D_{n+l}$. For our purpose, we do not need to act non-trivially on the set of $l$ green punctures, so we consider the inclusion:
$$B_{n}\hookrightarrow B_{n+l}$$ given by adding $l$ trivial strands, associated to the $l$ green punctures. All braid group actions from now on will be pre-composed with this inclusion.
\end{notation}
\begin{proposition}[Version of the Lawrence representation] \label{T1}
Following \cite{CrM}, there is a well defined braid group action (arising from the mapping class group of the punctured disc) which commutes with the action of deck transformations at the homological level:
$$B_n \curvearrowright H^{-k,l}_{n,m} \ (\text{as a module over }\Z[x^{\pm1}, d^{\pm1}]).$$ 
We denote this action by:
$$L_{n,m}: B_n\rightarrow \Aut_{\Z[x^{\pm 1},d^{\pm 1}]}(H^{-k,l}_{n,m}).$$
and call it the Lawrence representation associated to the covering space $\tilde{C}^{-k,l}_{n+l,m}$. 
\end{proposition}
\subsection{Homology classes}
More specifically, we present a recipe to produce certain homology classes in the homology of the covering space using submanifolds in the base configuration space. 

Let us fix ${\bf \tilde{d}} \in \tilde{C}^{-k,l}_{n+l,m}$ a lift of the base point ${ \bf d}=\{d_1,...,d_m\}$ in the covering.
\begin{notation}[Lifts of paths to the covering]
Let $\gamma$ be a path in $C_{n,m}$ which starts in $\bf d$. For the rest of the paper, we denote by $\tilde{\gamma}$ the unique lift of $\gamma$ in the covering $\tilde{C}^{-k,l}_{n+l,m}$ which starts in $\bf \tilde{d}$.
\end{notation}

\begin{notation}
Let us consider the following indexing set:
$$E_{n,m}=\{e=(e_1,...,e_{n})\in \N^{n} \mid e_1+...+e_{n}=m \}.$$
\end{notation}
Now we present a method for constructing homology classes. For each partition $e\in E_{n,m}$, we consider $m$ segments between the boundary point $w$ and the punctures ${1,...,n}$, which are specified by the components of the partition, as in picture \ref{fig4}. We take $\bar{U}_e$ to be the $m$-dimensional submanifold in $C_{n+l,m}$ constructed from the product of these red segments, quotiented by the action of the symmetric group. Moreover, we fix a set of $m$ paths from the boundary points to each such red segment. The set of these paths will give a path in the configuration space, which we denote by $\eta_e$. Now, we lift these to the covering. We consider $\tilde{U}_e$ to be the lift of the $m$-manifold $\bar{U}_e$ in $\tilde{C}^{-k,l}_{n+l,m}$ through $\tilde{\eta}_e(1)$.
\vspace{-3mm}
\begin{figure}[H]
\begin{center}
\begin{tikzpicture}\label{pic}
[x=0.5mm,y=0.02mm,scale=0.1/5,font=\Large]
\foreach \x/\y in {-1.2/2, 0.4/2 , 1.3/2 , 2.5/2 , 3.6/2 } {\node at (\x,\y) [circle,fill,inner sep=1.3pt] {};}
\foreach \x/\y in {2/2.6, 2/3.2 } {\color{dgreen} \node at (\x,\y) [circle,fill,inner sep=1.3pt] {};}
\node at (-1,2) [anchor=north east] {$w$};
\node at (0.5,2.6) [anchor=north east] {$1$};
\node at (4,2.6) [anchor=north east] {$n$};
\node at (3.3,3) [anchor=north east] {\color{dgreen}$n+1$};
\node at (3.2,3.5) [anchor=north east] {\color{dgreen}$n+l$};
\node at (0.1,2.6) [anchor=north east] {$\color{black!50!red}e_1$};
\node at (3.5,2.4) [anchor=north east] {$\color{black!50!red}e_{n}$};
\node at (1.3,3) [anchor=north east] {\huge{\color{black!50!red}$\bar{U}_e$}};
%\node at (1.6,3) [anchor=north east] {\huge{\color{black!50!red}$\bar{U}_e$}};
%\node at (1.6,3) [anchor=north east] {\huge{\color{black!50!red}$\bar{U}_e$}};
\node at (1.3,6.2) [anchor=north east] {\huge{\color{black!50!red}$\tilde{U}_e$}};
\node at (2,5.1) [anchor=north east] {\Large \color{black!50!red}$\tilde{\eta}_e$};
\node at (-2.5,2) [anchor=north east] {\large{$C_{n+l,m}$}};
\node at (-2.5,6) [anchor=north east] {\large{$\tilde{C}^{-k,l}_{n+l,m}$}};
\draw [very thick,black!50!red,-](-1.2,2) to (0.4,2);
\draw [very thick,black!50!red,-][in=-155,out=-30](-1.2,2) to (0.4,2);
\draw [very thick,black!50!red,-] [in=-145,out=-30](-1.2,2) to (3.6,2);
\draw [very thick,black!50!red,-] [in=-130,out=-40](-1.2,2) to (3.6,2);

\draw [very thick,black!50!red,->](-1.2,2) to (-0.5,2);
\draw [very thick,black!50!red,->][in=-160,out=-30](-1.2,2) to (-0.3,1.8);
\draw [very thick,black!50!red,->] [in=-183,out=-33](-1.2,2) to (1.4,1.25);
\draw [very thick,black!50!red,->] [in=-181,out=-37](-1.2,2) to (1.4,1);

\draw (2,2.1) ellipse (3.2cm and 1.45cm);
\draw (2,5.6) ellipse (3cm and 1.3cm);
\node (d1) at (1.3,0.7) [anchor=north east] {$d_1$};
\node (d2) at (1.8,0.7) [anchor=north east] {$d_2$};
\node (dn) at (2.8,0.7) [anchor=north east] {$d_m$};
\node (dn) at (4,1.7) [anchor=north east] {\Large \color{black!50!red}$\eta_e$};
\draw [very thick,dashed, black!50!red,->][in=-60,out=-190](1.2,0.7) to  (-0.3,2);
\draw [very thick,dashed,black!50!red,->][in=-70,out=-200](1.3,0.7) to (0,1.9);
\draw [very thick,dashed,black!50!red,->][in=-90,out=0](2.5,0.7) to (3,1.5);
\node at (0.8,0.7) [anchor=north east] {\bf d$=$};
\node at (0.8,4.4) [anchor=north east] {\bf $\bf \tilde{d}$};
\draw [very thick,dashed,black!50!red,->][in=-70,out=-200](0.8,4.4) to (3,5);
\node at (0.8,4.4) [anchor=north east] {\bf $\bf \tilde{d}$};
 \draw[very thick,black!50!red] (2.82, 5.6) circle (0.6);
\end{tikzpicture}
\end{center}
\caption{Classes in the covering space}
\label{fig4}
\end{figure}
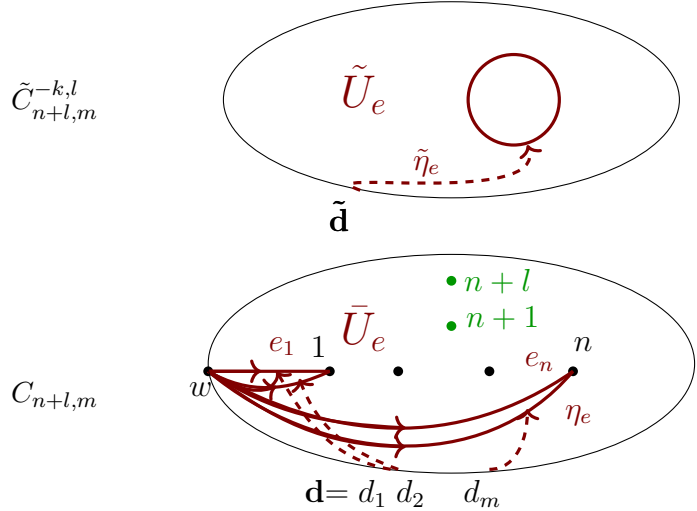
\vspace{-10mm}
\begin{definition}[Code sequence] Let us denote the homology class associated to this submanifold by:
$$\U_{e}:= [\tilde{U}_e]  \in H^{-k,l}_{n,m}.$$
\end{definition}
\subsection{Intersection pairing}
Further on, we will use a Poincar\'e-Lefschetz duality between these homologies of the covering space with respect to different parts of its boundary. 
\begin{proposition}(\cite{CrM})\label{P:3'}
For any $n,m\in \N$ and $0\leq k \leq n$, there exists a well defined intersection pairing:
$$\langle ~,~ \rangle: H^{-k,l}_{n,m} \otimes H^{-k,l,\partial}_{n,m}\rightarrow\Z[x^{\pm1}, d^{\pm1}]$$
which is sesquilinear with respect to the involution $\iota: \Z[x^{\pm 1},d^{\pm 1}]\rightarrow \Z[x^{\pm 1},d^{\pm 1}]$ given by $\iota(x)=x^{-1}$ and $\iota(d)=d^{-1}$.
\end{proposition}
In the next part we present the formula for this intersection pairing. More precisely, we will see that if we start with two classes given by lifts of $m$-dimensional submanifolds in the base configuration space, then their pairing is encoded by the geometrical intersection in $C_{n+l,m}$ together with the local system $\phi^{-k}$. 

Let us start with $C_1 \in H^{-k,l}_{n,m}$ and $C_2 \in H^{-k,l,\partial}_{n,m}$. We suppose that there exist immersed submanifolds $M_1,M_2 \subseteq C_{n,m}$ such that two of their lifts, which we denote by $\tilde{M}_1, \tilde{M}_2$, give the homology classes $C_1$ and $C_2$ respectively. Moreover, we suppose that $M_1$ and $M_2$ intersect transversely in a finite number of points. 

\begin{proposition}[Pairing from curves in the base space and the local system]\label{P:3} 

For each intersection point $x \in M_1 \cap M_2$, we will construct a loop and denote it by $l_x \subseteq C_{n,m}$. Also, let $\alpha_x$ be the sign of the geometric intersection between $M_1$ and $M_2$ in the base configuration space, at the point $x$.\\
a) {Construction of $l_x$}\\
 We assume that there are two paths $\gamma_{M_1}, \gamma_{M_2}$ in $C_{n+l,m}$ which start in $\bf d$ and end on $M_1$ and $M_2$ respectively such that
$\tilde{\gamma}_{M_1}(1) \in \tilde{M}_1$ and $ \tilde{\gamma}_{M_2}(1) \in \tilde{M}_2$.
Further on, we consider $\delta_{M_1}, \delta_{M_2}:[0,1]\rightarrow C_{n,m}$ such that:
\begin{equation}
\begin{cases}
\mathrm{Im}(\delta_{M_1})\subseteq M_1; \delta_{M_1}(0)=\gamma_{M_1}(1);  \delta_{M_1}(1)=x\\
\mathrm{Im}(\delta_{M_2})\subseteq M_2; \delta_{M_2}(0)=\gamma_{M_2}(1);  \delta_{M_2}(1)=x.
\end{cases}
\end{equation}
Then, we consider the following loop, based in $\bf d$, associated to the intersection point $x$:
$$l_x=\gamma_{M_1}^{-1}\circ\delta_{M_1}^{-1}\circ \delta_{M_2}\circ \gamma_{M_2}.$$
b) {Formula for the intersection form (\cite{Big})}\\
Then, the intersection pairing from proposition \ref{P:3'} has the following formula (which uses the loop $l_x$ and the local system):
\begin{equation}\label{eq:111}  
\langle [\tilde{M}_1],[\tilde{M}_2]\rangle=\sum_{x \in M_1 \cap M_2} \alpha_x \cdot \phi^{-k,l}(l_x) \in \Z[x^{\pm1}, d'^{\pm1}].
\end{equation}
\end{proposition}
\begin{remark}[Computation of the intersection pairing in our cases]
For our situation, we will have submanifolds $M_1,M_2 \subseteq C_{n+l,m}$ which are given by geometric supports, meaning sets of $m$ curves in the punctured disk.
In this case, we can compute the pairing by looking at product of the signs of the local intersections in the punctured disk and denote that by $\alpha_x$ (instead of the geometric intersection in the configuration space) with the price of replacing the variable $d'$ by $d=-d'$ in the formula \eqref{eq:111}:
\begin{equation}\label{eq:1}  
\langle [\tilde{M}_1],[\tilde{M}_2]\rangle=\sum_{x \in M_1 \cap M_2} \alpha_x \cdot \phi^{-k,l}(l_x) \in \Z[x^{\pm1}, d^{\pm1}].
\end{equation}
The difference comes from a sign of an induced permutation which should be counted in the formula \eqref{eq:111} and here we encode this by changing the variable $d'$ to $d$ (see the details in Section 3 from \cite{Big}).
Overall the signs $\alpha_x$ from formulas \eqref{eq:111} and \eqref{eq:1} might be different, but this is exactly compensated by changing the variables (which has the effect of modifying $\phi^{-k,l}(l_x)$ by the same sign). 
\end{remark}
We remark that the paths to the base point $\bf d$ (which we fixed in the base space) correspond to a choice of a fundamental class up to a sign, which is needed for the Poincar\'e-Lefschetz type duality presented above. However, there is still the sign ambiguity concerning the orientation of this class. This corresponds to the choice of orientation of the loop $l_x$, when it is evaluated by the local system in formula \eqref{eq:1}. We choose the above orientation, such that it fits with the choice of orientation needed in \cite{Cr3}.

\section{Topological model with immersed classes}\label{4}
In this section we present the context and the topological model for coloured Jones and Alexander polnomials constructed in \cite{Cr3}. It uses the homology with a local system with no additional twisting part and no green punctures (corresponding to $k=0$, $l=0$) and two homology classes which come from immersed submanifolds in the base configuration space. 
Let us start with an oriented link which can be seen as a closure of a braid with $n$ strands. Also, let us fix $N \in \N$ to be the colour of the invariants that we want to understand. 
\begin{notation}
In this context, we will use the following homology groups:
\begin{equation*}
H^{0,0}_{2n-1,(n-1)(N-1)} \text { \ \ \ \ and \ \ \ \ \ } H^{0,0,\partial}_{2n-1,(n-1)(N-1)}.
\end{equation*}
From now on, if we have $k=0$, meaning that our local system does not distinguish between the punctures of the punctured disc, we remove this notation from the corresponding homology groups. 
\end{notation}
In the next part, we want to define two homology classes in these groups. The first class $\mathscr E_n^{N}$ is constructed using the building blocks $\bar{U}_e$ for $e \in E_{2n-1,(n-1)(N-1)}$ in the base configuration space, lifted in a very precise manner in the covering. 
\begin{figure}[H]
\centering
%$${\color{red} \tilde{\mathscr U}_{0,i_1,...,i_{n-1},N-1-i_{n-1},...,N-1-i_{1}} \in H_{2n-1,(n-1)(N-1)}} \ \ \text{ and }\ \  {\color{dgreen}\mathscr G_{n}^N \in H^{\partial}_{2n-1,(n-1)(N-1)}}.$$
\includegraphics[scale=0.5]{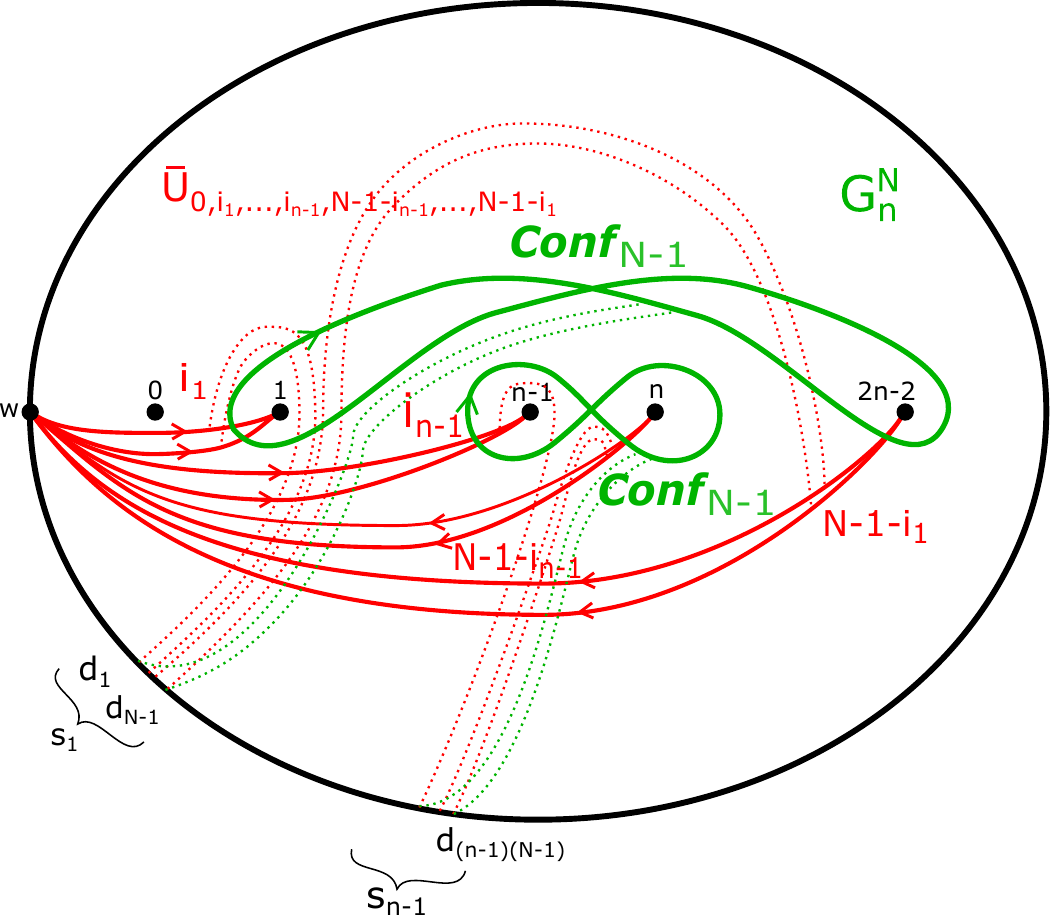}
\caption{Immersed submanifolds}
\label{fig5}
\end{figure}
\vspace{-35mm}
$\hspace{35mm} {\color{red}\eta^{U}_{i_1,...,i_{n-1}}} \hspace{15mm} {\color{dgreen}\eta^{G}} \hspace{10mm}$
\vspace{25mm}
\begin{definition}[First homology class]\label{D:2}
\noindent
\begin{itemize} 
\item[$\bullet$] Let us fix a set of indices $i_1,...,i_{n-1}\in \{0,...,N-1\}$. We start with the submanifold $\bar{U}_{{0,i_1,...,i_{n-1},N-1-i_{n-1},...,N-1-i_{1}}}$ in $C_{2n-1,(n-1)(N-1)}$. Then, we consider the path $\eta^{U}_{i_1,...,i_{n-1}}$ from the base point $\bf d$ to this submanifold, as in picture \ref{fig5}. We will use $\tilde{\eta}^{U}_{i_1,...,i_{n-1}}$ to lift this submanifold.\\ 
\item[$\bullet$]Let $\tilde{\mathscr U}_{0,i_1,...,i_{n-1},N-1-i_{n-1},...,N-1-i_{1}} \in H_{2n-1,(n-1)(N-1)}$ be the class given by the lift of $\bar{U}_{{0,i_1,...,i_{n-1},N-1-i_{n-1},...,N-1-i_{1}}}$ in $\tilde{C}_{2n-1,(n-1)(N-1)}$ through $\tilde{\eta}^{U}_{i_1,...,i_{n-1}}(1)$.
\end{itemize}
Our first class $\mathscr E_n^N \in H_{2n-1,(n-1)(N-1)}$ is defined as a linear combination of the above classes, over all choices of indices:
\begin{equation}\label{eq:1'}
\mathscr E_n^{N}=\sum_{{ i_1,...,i_{n-1}=0}}^{N-1} \hspace{-3mm} d^{- \sum_{k=1}^{n-1} i_k} \cdot \tilde{ \mathscr U}_{0,i_1,...,i_{n-1},N-1-i_{n-1},...,N-1-i_{1}}.
\end{equation}
\end{definition}
\begin{definition}[Second homology class]\label{D:3}
\noindent
\begin{itemize} 
\item[$\bullet$] Let $G_n^N$ be the immersed submanifold in $C_{2n-1,(n-1)(N-1)}$ given by the product $n-1$ ordered configuration spaces of $N-1$ points on the figure eights from picture \ref{fig5}, quotiented by the action of the symmetric group. 
\item[$\bullet$] Let $\eta^{G}$ be the path from the base point $\bf d$ to this submanifold and $\tilde{\eta}^{G}$ the associated lift.\\ 
\end{itemize}
The second homology class $\mathscr G_n^N \in H^{\partial}_{2n-1,(n-1)(N-1)}$ will be the class given by the lift of $G_n^N$ through $\tilde{\eta}^{G}(1)$.

\end{definition}
\begin{remark}
In this construction, it is important that the local system evaluates the loops around symmetric punctures $(k, 2n-1-k)$ with the same value. This means that when we go around one figure eight, it lifts to a closed loop in the corresponding covering space. This explains that the immersed manifold $G_n^N$ lifts to a closed embedded manifold in the covering $\tilde{C}_{2n-1,(n-1)(N-1)}$.
\end{remark}
\vspace{-4mm}
$$ {  \Huge \color{black} H_{2n-1,(n-1)(N-1)}}  \ \ \ \ \ \ \ \ \ \ \ \ \ \ \ \ \ \ \ \ \ \ \ \ \ \ \ \ \ \ \ \ \ \ \ \ \ \ \ \ \ \ { \Huge \color{black} H^{\partial}_{2n-1,(n-1)(N-1)}}$$
\vspace{-3mm}
$$ \ \ \ {\Huge \color{red} \mathscr E_n^{N}= \hspace{-3mm}\sum_{{ i_1,...,i_{n-1}=0}}^{N-1} \hspace{-3mm} d^{-\sum_{k=1}^{n-1} i_k} \cdot \tilde{ \mathscr U}_{0,i_1,...,i_{n-1},N-1-i_{n-1},...,N-1-i_{1}}} \ \ \ \ \ \ \ \ \ \ \ \ \ \ \ \  {\color{dgreen} \mathscr G_{n}^N} \ \ \ \ \ \ \ \ \ \ \ \ \ \ \ \ \ \ \ \ \ \ \ \ \ \ \ \ \ \ \ \ \ \ \  $$
\vspace{-5mm}

$\hspace{60 mm} \downarrow \text{lifts}$

\vspace{-3mm}
\begin{figure}[H]
\centering
\includegraphics[scale=0.5]{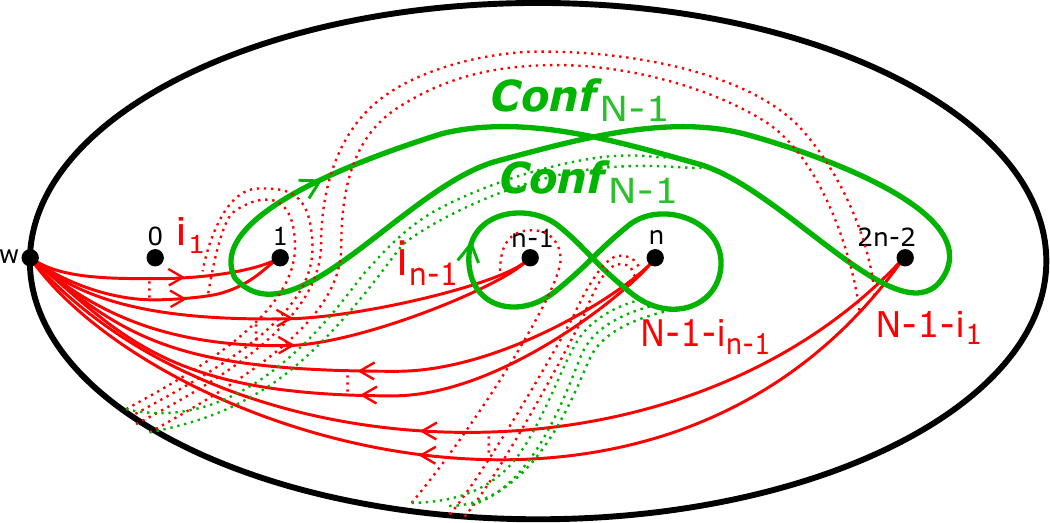}
%\vspace{-2mm}
%$${\color{red} \bar{U}_{0,i_1,...,i_{n-1},N-1-i_{n-1},...,N-1-i_{1}}} \ \ \ \ \ \ \ \ \ \ \ \ \ \ \ \ \ \ \ \ \ \ \ \ \ \ \ \ \ \ \ \ \ \ \ \ \ \ \ \  \ \ \ \ \ \ \ \ \ \ \color{dgreen} {G_n^N} \ \ \ \ \ \ \ \ \ \ \ \ \ \ \ \ \ \ \ \ \ \ \ $$
\vspace{1mm}
\caption{Homology classes-immersed model}
\label{fig:classes}
\end{figure}
\vspace{-7mm}
The main result of \cite{Cr3} shows that we can recover the two sequences of quantum invariants from the pairing between $(\beta_{n} \cup {\mathbb I}_{n-1}) \ \mathscr E_n^{N}$ and $\mathscr G_n^{N}$. 
\begin{theorem}(\cite{Cr3}) \label{T:unified}
Let $L$ be an oriented link and $\beta_n \in B_n$ such that $L=\hat{\beta}_n$. For a fixed colour $N \in \N$, we define the following polynomial in two variables:
\begin{equation}\label{eq:2}
\mathscr I_N(\beta_n):=\langle(\beta_{n} \cup {\mathbb I}_{n-1}){ \color{black} \mathscr E_n^N}, {\color{black} \mathscr G_n^N}\rangle \in \Z[x^{\pm 1},d^{\pm 1}].
\end{equation}
Then, $\mathscr I_N$ recovers the $N^{th}$ coloured Jones and Alexander invariants as below:
\begin{equation}
\begin{aligned}
& J_N(L,q)= q^{-(N-1)w(\beta_n)} \cdot \ q^{-(N-1)(n-1)} \ \ \mathscr I_N(\beta_n)|_{\psi_{q,N-1}}\\
&\Phi_{N}(L,\lambda)={\xi_N}^{-(1-N)\lambda w(\beta_n)} \cdot {\xi_N}^{-\lambda (1-N)(n-1)}  \mathscr I_N(\beta_n) |_{\psi_{\xi_N,\lambda}}.
\end{aligned}
\end{equation}
\end{theorem}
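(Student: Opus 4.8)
The plan is to deduce Theorem \ref{THEOREM} from the known model of Theorem \ref{T:unified} by establishing a precise algebraic comparison between the new state sum $\mathscr S_N(\beta_n)$ and the old pairing $\mathscr I_N(\beta_n)$. The target identity has the shape $\mathscr S_N(\beta_n)=x^{a}d^{b}\,\mathscr I_N(\beta_n)$ in $\Z[x^{\pm1},d^{\pm1}]$ for explicit $a,b$ depending only on $n,N$; granting it, the definition $\Lambda_N(\beta_n)=u^{n-1-w(\beta_n)}x^{-n}\,\mathscr S_N(\beta_n)$ becomes $u^{n-1-w(\beta_n)}$ times $\mathscr I_N(\beta_n)$ up to the monomial $x^{a-n}d^{b}$, and the two specialisation statements follow by substituting $\psi_{1,q,N-1}$ and $\psi_{1-N,\xi_N,\lambda}$ and comparing with the prefactors $q^{\mp(N-1)w(\beta_n)}q^{\pm(N-1)(n-1)}$ of Theorem \ref{T:unified}. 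The role of the auxiliary variable $u$ (Definition \ref{D:1''}) is precisely this: since it enters only through the global prefactor $u^{n-1-w(\beta_n)}$, the single polynomial $\Lambda_N$ can reproduce the two opposite framing corrections of $J_N$ and $\Phi_N$ through the two choices $c=1$ and $c=1-N$ in $\psi_{c,q,\lambda}(u)=q^{c\lambda}$. I note already that the coloured Jones substitution works transparently once $a,b$ are fixed so that $x^{a}d^{b}\mapsto x^{n}$ under $\psi_{q,N-1}$, while reconciling the very same monomial with the coloured Alexander framing correction is the genuinely delicate point, to which I return.

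The geometric heart is the proof of this comparison, carried out through the three modifications of the strategy. First, one passes from $2n-1$ to $2n$ punctures by inserting the puncture $n$ and the purple segment from $w$: by the prescribed lift, the winding of the extra configuration point along this segment contributes exactly $d^{\sum_k i_k}$, so that $\mathscr F_{i_1,\dots,i_{n-1}}$ carries geometrically the coefficient that stood by hand in front of $\tilde{\mathscr U}_{0,i_1,\dots,i_{n-1},N-1-i_{n-1},\dots,N-1-i_1}$ inside $\mathscr E_n^N$ (Definition \ref{D:2}). Second, one changes the local system from $\phi^{0}$ to $\phi^{-n}$, which evaluates the loops around the symmetric punctures $(k,2n+1-k)$ with inverse powers of $x$; this is exactly the condition under which a simple embedded circle around such a pair lifts to a closed loop in $\tilde C^{-n}_{2n,(n-1)(N-1)+1}$, just as a figure eight did for $\phi^{0}$ (compare the Remark following Definition \ref{D:3}). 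Replacing each figure eight of $G_n^N$ by such a circle produces the embedded dual class $\mathscr L_{i_1,\dots,i_{n-1}}$, while $\mathscr F_{i_1,\dots,i_{n-1}}$ is the lift of the same red support into the twisted cover.

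To prove that these modifications change the pairing only by the monomial, I would apply the intersection formula of Proposition \ref{P:3} in both models. The supports in the base configuration space are essentially unchanged, so I would set up a sign-preserving bijection between the intersection points of $(\beta_n\cup\mathbb{I}_n)\mathscr F_{i_1,\dots,i_{n-1}}$ with $\mathscr L_{i_1,\dots,i_{n-1}}$ and those of $(\beta_n\cup\mathbb{I}_{n-1})\tilde{\mathscr U}_{0,i_1,\dots,i_{n-1},N-1-i_{n-1},\dots,N-1-i_1}$ with $G_n^N$, and then compare, point by point, the two local systems along the associated loops $l_x$. The only difference is that $\phi^{-n}$ flips the sign of the winding around the last $n$ punctures; combined with the geometric $d$-weight of the first step, I would show these contributions assemble, over all intersection points and after summing over $i_1,\dots,i_{n-1}$, into a single global monomial. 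Here I expect to need the geometrical criterion extracted from \cite{Cr3}, characterising which lifted classes pair to the $U_q(sl(2))$ invariants, in order to organise the index-dependent winding contributions; the new classes are designed to satisfy that criterion. This reduces Theorem \ref{THEOREM} to the substitution computation of the first paragraph.

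The main obstacle is exactly this comparison: controlling $\phi^{0}\to\phi^{-n}$ and the figure-eight-to-circle replacement simultaneously, and verifying that the per-point powers of $x$ reorganise into a \emph{global} monomial independent of $i_1,\dots,i_{n-1}$, with embeddedness of the circles neither creating nor destroying intersection points with the braided red support. The sharpest form of this difficulty is the tension I flagged above: the clean leading factor $x^{n}$ that makes the coloured Jones specialisation exact must, under $\psi_{1-N,\xi_N,\lambda}$, conspire with the writhe-dependent prefactor $u^{n-1-w(\beta_n)}$ to produce the opposite-signed framing correction $\xi_N^{(N-1)\lambda(n-1)}$ of $\Phi_N$. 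Pinning down the exact monomial $x^{a}d^{b}$ so that both framing corrections emerge from the two values $c=1$ and $c=1-N$ is therefore the crux, and it is precisely this bookkeeping of the linking-number power of $x$ that the variable $u$ is introduced to record. Once the bijection of intersection points and the uniformity of the $x$-exponent are established, the rest is the routine substitution into the two specialisations.
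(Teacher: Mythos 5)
There is a genuine gap, and it is structural: you have not written a proof of the statement in question at all. The statement is Theorem \ref{T:unified} itself --- the immersed-class model expressing $J_N$ and $\Phi_N$ through the pairing $\mathscr I_N(\beta_n)=\langle(\beta_n\cup\mathbb I_{n-1})\mathscr E_n^N,\mathscr G_n^N\rangle$ --- which this paper imports from \cite{Cr3} without reproving. Your proposal opens by declaring this theorem ``known'' and uses it as the sole nontrivial input to derive Theorem \ref{THEOREM}. Read as a proof of Theorem \ref{T:unified}, this is circular: every step of your comparison $\mathscr S_N(\beta_n)=x^a d^b\,\mathscr I_N(\beta_n)$ and of the two specialisation computations presupposes exactly the identities between $\mathscr I_N$ and the quantum invariants that were to be established. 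Nothing in your text independently connects the homological pairing to $J_N$ or $\Phi_N$. A proof of Theorem \ref{T:unified} must supply the quantum--homological bridge: the identification of the braid group action on highest-weight spaces of Verma modules for $U_q(sl(2))$ with the Lawrence representation $L_{2n-1,(n-1)(N-1)}$ (Kohno \cite{Koh}, in Martel's explicit form \cite{Martel}); the expression of $J_N$ and $\Phi_N$ as (partial) traces, hence as sums of coefficients $\alpha(0,i_1,\dots,i_{n-1})$ in the expansion of $(\beta_n\cup\mathbb I_{n-1})\tilde{\mathscr U}_{0,i_1,\dots,i_{n-1},N-1-i_{n-1},\dots,N-1-i_1}$ in the code-sequence basis; and the verification that pairing with $\mathscr G_n^N$ extracts precisely the coefficients indexed by the symmetric partitions $E^{N,symm}$ --- the duality property recorded as relation \eqref{eq:7} in this paper --- together with the writhe normalisations. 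None of these ingredients appear in your argument.

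A secondary observation: the content you did write is essentially the paper's own Section \ref{6} proof of Theorem \ref{THEOREM} \emph{from} Theorem \ref{T:unified}, and as such it is broadly on target, though your bookkeeping is more pessimistic than necessary. The paper does not need a ``global monomial assembled over all intersection points'': Lemma \ref{L:1} moves the purple base point at the cost of exactly $x^n d^{\sum_k i_k}$ \emph{per index tuple}, so the $d$-weight cancels the coefficient $d^{\sum_k i_k}$ in $\mathscr E_n^N$ term by term, leaving the clean identity $\mathscr S_N(\beta_n)=x^n\,\mathscr I_N(\beta_n)$ of Lemma \ref{L:2} (so $a=n$, $b=0$); the remaining steps (removing the $n^{\text{th}}$ puncture, twisting $\phi^{-n}$ back to $\phi^0$, and lift-independence via the criterion \eqref{eq:7}) are handled exactly as you sketch. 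So your proposal would be a reasonable outline for the main theorem --- but it does not prove, and cannot prove, the cited theorem it was aimed at.
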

\section{Construction of the homology classes}\label{5}
The goal of this part is to construct new homology classes which will lead to the topological model with embedded Lagrangians.
\begin{itemize}
\item[$\bullet$] First, we want to replace the immersed figure eights from the above section with embedded arcs, which will be given by circles with one point that is removed. One thing that we bear in mind is that we would like to get well-defined lifts in the corresponding covering space. For this reason, we will add $(n-1)$ green punctures, and change the local system from $\phi^0$ to $\phi^{-n,n-1}$. 
\item[$\bullet$] The second aim is to construct new paths to the base point, that determine the lifts, which are simpler from the geometric point of view and such that the intersection pairing is easier to compute. 
\item[$\bullet$] The last aim is to encode geometrically the coefficients which appear in the homology class $\mathscr E_n^N$ from relation \eqref{eq:1'}. In order to do this, we add an extra puncture to the punctured disk and use configuration spaces with one extra particle. 
\end{itemize}

\subsection{Colour $N>2$}\label{51} We want to construct a model which uses geometric supports which are given by embedded submanifolds rather than figure eights.
If we replace each figure eight by a circle and consider the submanifold given by products of configurations of $N-1$ points on these circles, then this submanifold does not lift to the covering if $N>2$ (for any local system $\phi^{-k,0}$ where $k \in \N$). The reason is that even if we balance the monodromy around the symmetric punctures (which ensures that the local system evaluates to a monomial whose power of $x$ is zero), if we have more than two particles then there is a loop in the configurations on circles which has a non-trivial relative winding of the particles (see the left hand side of picture \ref{Picturemon}). So, its evaluation by the local system has a non-trivial power of $d$.

This shows that the green submanifold in the configuration space given by configurations on circles does not lift to a well-defined submanifold in the covering associated to a local system $\phi^{-k,0}$.
\begin{figure}[H]
\centering
\hspace{-5mm}\includegraphics[scale=0.37]{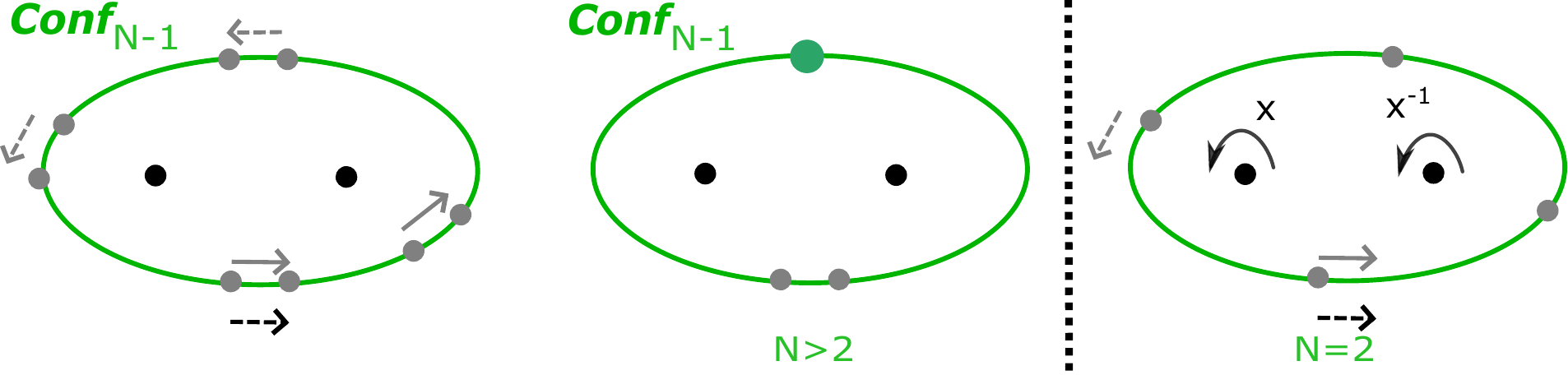}
\caption{Monodromy of configurations on circles}
\label{Picturemon}
\end{figure}

Instead, the idea is to consider configurations on circles where we remove their maximal point, in other words, we consider the open arc which starts and ends in this maximal point. 

\begin{notation}[Topological context]
In this case we work with the configuration space of $(n-1)(N-1)+1$ points in the punctured disk $\mathscr D_{3n-1}$, the local system associated to the parameters $k=n$ and $l=n-1$ and the following homology groups:
\begin{equation*}
H^{-n,n-1}_{2n,(n-1)(N-1)+1} \text { \ \ \ \ and \ \ \ \ \ } H^{-n,n-1,\partial}_{2n,(n-1)(N-1)+1}.
\end{equation*}
\end{notation}
\clearpage
\begin{definition}[First homology class]

\

Let us fix a set of indices $i_1,...,i_{n-1}\in \{0,...,N-1\}$. 

\begin{itemize}
\item[$\bullet$] We consider $\bar{V}_{i_1,...,i_{n-1}}$ to be the image of the product of all the red segments and the purple segment from figure \ref{Picture11} in the configuration space $C_{3n-1,(n-1)(N-1)+1}$. 
\item[$\bullet$]Also, we denote by $\eta^V_{i_1,...,i_{n-1}}$ the path in the configuration space from the base point $\bf d$ towards $\bar{V}_{i_1,...,i_{n-1}}$.
\end{itemize}
Further on, we consider the class given by the lift of $ \bar{V}_{i_1,...,i_{n-1}}$ through $\tilde{\eta}^V_{i_1,...,i_{n-1}}(1)$ and denote it as:
$$\mathscr F_{i_1,...,i_{n-1}} \in H^{-n,n-1}_{2n,(n-1)(N-1)+1}.$$
\end{definition}
\begin{definition}[Second homology class]

\
 
\begin{itemize}
\item[$\bullet$] Let $\bar{L}$ be the image of the product of configuration spaces of $N-1$ points on the green circles (from figure \ref{Picture11}) times the one dimensional submanifold given by the blue circle, quotiented into the unordered configuration space $C_{3n-1,(n-1)(N-1)+1}$. 
\item[$\bullet$]Also, for a set of indices $i_1,...,i_{n-1}\in \{0,...,N-1\}$
we denote by $\eta^L_{i_1,...,i_{n-1}}$ the path in the configuration space from the base point $\bf d$ towards $\bar{L}$, presented in figure \ref{Picture11}.
\end{itemize}
We consider the class given by the lift of $ \bar{L}$ through $\tilde{\eta}^L_{i_1,...,i_{n-1}}(1)$ and denote it by:
$$\mathscr L_{i_1,...,i_{n-1}} \in H^{-n,n-1,\partial}_{2n,(n-1)(N-1)+1}.$$
\end{definition}
\begin{proposition}[Configurations on circles for $N>2$]

\
 
The class $\mathscr L_{i_1,...,i_{n-1}}$ is well defined in the dual homology $H^{-n,n-1,\partial}_{2n,(n-1)(N-1)+1}$.
\end{proposition}
\begin{figure}[H]
\centering
$${\color{red} \mathscr F_{i_1,...,i_{n-1}} \in H^{-n,n-1}_{2n,(n-1)(N-1)+1}} \ \ \ \ \ \ \ \ \ \text{ and }\ \ \ \ \ \ \  \ \ \  {\color{dgreen} \mathscr L_{i_1,...,i_{n-1}}\in H^{-n,n-1,\partial}_{2n,(n-1)(N-1)+1}}.$$
\hspace{-5mm}\includegraphics[scale=0.37]{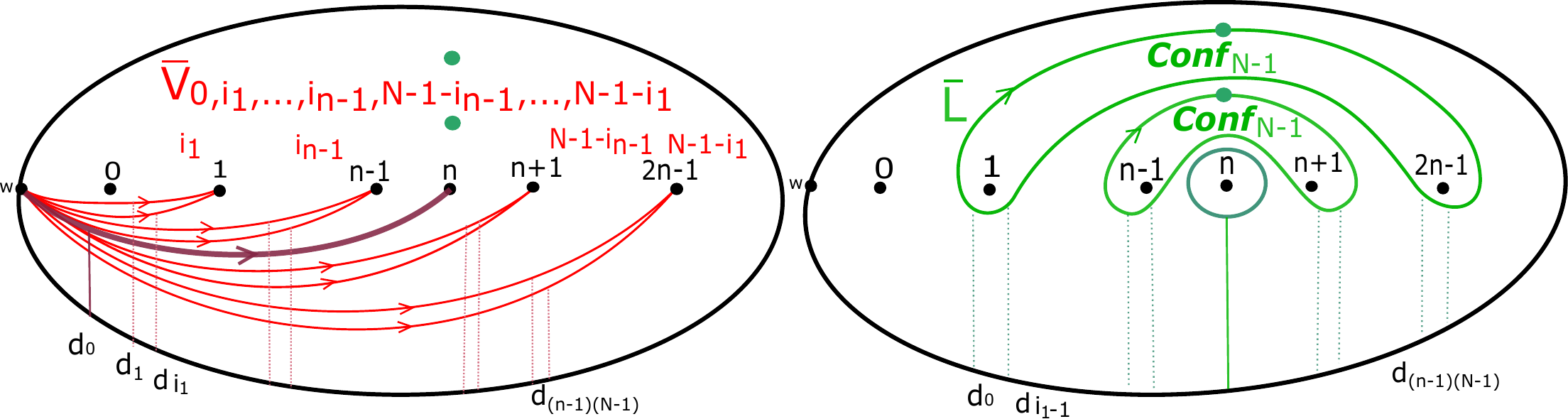}
\vspace{-13mm}
$$\hspace{15mm} {\color{purple!70!pink}\eta^{V}_{i_1,...,i_{n-1}}} \hspace{60mm} {\color{dgreen}\eta^L_{i_1,...,i_{n-1}}} \hspace{14mm}$$
%\vspace{-10mm}
\vspace{2mm}
\caption{Embedded classes $N>2$}
\label{Picture11}
\end{figure}
\begin{proof}
This comes from the fact that the geometric support of $\bar{L}$ gives a submanifold in the configuration space which has a well-defined lift in the covering $\tilde{C}^{-n,n-1}_{3n-1,(n-1)(N-1)+1}$ (since it is contractible, thus in particular simply-connected). This submanifold, in turn, leads to a well-defined class in the dual homology of the covering of the configuration space, which is Borel-Moore also relative to configurations which approach one of the $n-1$ maximal green punctures.
\end{proof}
\subsection{Colour $N=2$, associated to the Jones and Alexander polynomials} \label{52}
In this case, we do not remove any green points and work directly with submanifolds given by products of circles. We notice that in the above topological context associated to $N=2$, the product of circles around symmetric punctures gives a well defined homology class in the covering associated to the local system $\phi^{-n,0}$. This comes from the following remark.
\begin{remark}[Configurations on circles for $N=2$]

\

If we consider one circle around two punctures, it lifts towards a closed curve in the covering. 
This is due to the fact that we have no relative twisting on such a circle (since we have just one particle), and also the local system counts the monodromy of the loop given by this circle around the symmetric points $(k,2n-k)$ with opposite signs (as shown in the right hand side of figure \ref{Picturemon}). 
\end{remark}
Using this argument for all the circles, we conclude that $\bar{L}$ lifts to a well defined Lagrangian in the covering $\tilde{C}^{-n}_{2n,(n-1)(N-1)+1}$. 

So in this case we do not remove the maximal points of the circles and set $l=0$. We consider the configuration space $ C_{2n,(n-1)(N-1)+1}$ , with the local system $\phi^{-n}$ and the homology groups:
$H^{-n}_{2n,(n-1)(N-1)+1} \text{ and } H^{-n,\partial}_{2n,(n-1)(N-1)+1}.$
\begin{definition}[Homology classes for $N=2$]
Let us consider the homology classes which come from the geometric supports from figure \ref{JA}:
\begin{figure}[H]
\centering
$${\color{red} \mathscr F_{i_1,...,i_{n-1}} \in H^{-n}_{2n,(n-1)(N-1)+1}} \ \ \ \ \ \text{ and }\ \ \  \ \ \  {\color{dgreen} \mathscr L_{i_1,...,i_{n-1}}\in H^{-n,\partial}_{2n,(n-1)(N-1)+1}}.$$
\hspace{-5mm}\includegraphics[scale=0.37]{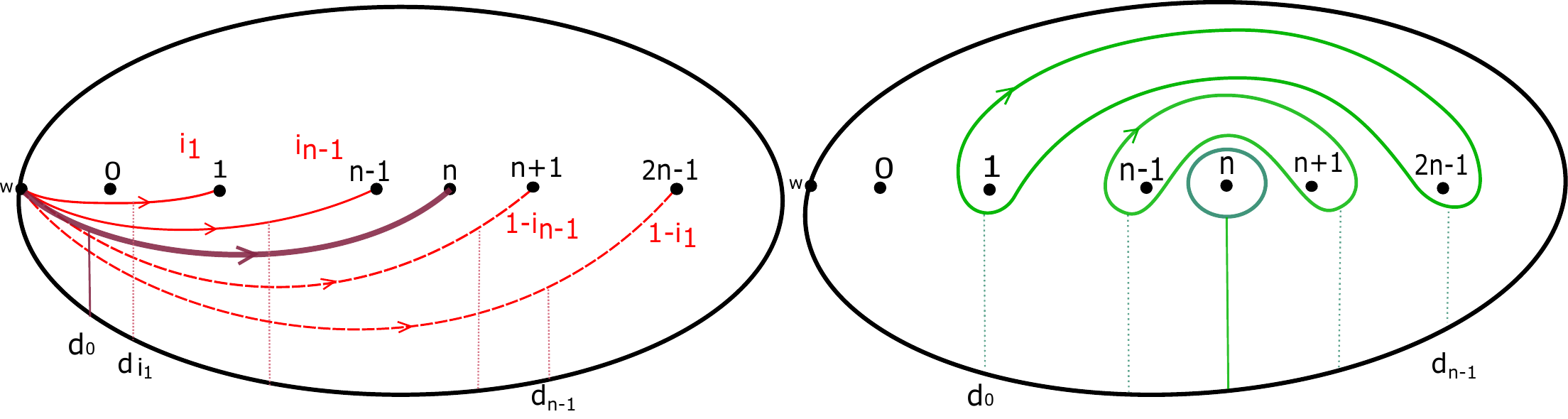}
\vspace{-13mm}
$$\hspace{15mm} {\color{purple!70!pink}\eta^{V}_{i_1,...,i_{n-1}}} \hspace{60mm} {\color{dgreen}\eta^L_{i_1,...,i_{n-1}}} \hspace{14mm}$$
%\vspace{-10mm}
\vspace{2mm}
\caption{Embedded classes, $N=2$, for Jones and Alexander polynomials}
\label{JA}
\end{figure}
\end{definition}
\begin{remark}
Even if the constructions of the homology classes for these embedded models for $N=2$ and $N>2$ are slightly different, the proof which we will provide for $N>2$ works in the same manner for the situation associated to $N=2$. This is due to the fact that even if we change the configuration space and the dual manifolds, in practice the intersection pairings will be calculated in the same manner.
\end{remark}
\section{Topological model with embedded Lagrangians}\label{6}
In this section we prove that the homology classes defined above are good building blocks for an intersection pairing that leads to the $U_q(sl(2))$-quantum invariants, through certain specialisations. More precisely, we aim to prove the intersection model presented in Theorem \ref{THEOREM}. We will use the following relation. 

\begin{lemma}[Relations between different Lagrangian intersections]\label{L:2}
For any $n$ and any braid $\beta_n\in B_n$, the following intersection pairings lead to the same result:
\begin{equation}\label{eq:50}
\mathscr S_N(\beta_n)=\mathscr I_N(\beta_n) \in \Z[x^{\pm1}, d^{\pm 1}].
\end{equation}
\end{lemma}
\begin{proof}
We start from the homology classes:
\begin{equation*}
\mathscr F_{i_1,...,i_{n-1}} \in H^{-n,n-1}_{2n,(n-1)(N-1)+1} \text { \ \ \ \ and \ \ \ \ \ }\mathscr L_{i_1,...,i_{n-1}} \in  H^{-n,n-1,\partial}_{2n,(n-1)(N-1)+1}.
\end{equation*}
Following the definition from equation \eqref{eq:00}, we have:
\begin{equation}
\mathscr S_N(\beta_n)=\sum_{i_1,...,i_{n-1}=0}^{N-1} \langle(\beta_{n} \cup {\mathbb I}_{n} ){ \color{black} \mathscr F_{i_1,...,i_{n-1}}}, {\color{black} \mathscr L_{i_1,...,i_{n-1}}}\rangle.
\end{equation}
On the other hand, the building blocks for the topological model from Theorem \ref{T:unified} were given by:
\begin{equation*}
\tilde{\mathscr U}_{0,i_1,...,i_{n-1},N-1-i_{n-1},...,N-1-i_{1}} \in H_{2n-1,(n-1)(N-1)} \text{ \ \ \ \ and \ \ \ \ \ } \mathscr G_n^N \in  H^{\partial}_{2n-1,(n-1)(N-1)}.
\end{equation*}
More precisely, following the definitions from equation \eqref{eq:1'} and \eqref{eq:2}  we have:
\begin{equation}
\begin{aligned}
&\mathscr I_N(\beta_n)=~\langle(\beta_{n} \cup {\mathbb I}_{n-1} ){ \color{black} \mathscr E_n^N}, {\color{black} \mathscr G_n^N}\rangle=\\
&=\sum_{{ i_1,...,i_{n-1}=0}}^{N-1} d^{-\sum_{k=1}^{n-1} i_k}\big\langle(\beta_{n} \cup {\mathbb I}_{n-1}) \tilde{ \mathscr U}_{0,i_1,...,i_{n-1},N-1-i_{n-1},...,N-1-i_{1}},\mathscr G_n^N\big\rangle.
\end{aligned}
\end{equation}
In the next part we will show that for any indices $i_1,...,i_{n-1}\in \{0,...,N-1\}$ the intersections between the associated classes which correspond to the immersed submanifolds and the classes corresponding to the embedded submanifolds are related as below:
\begin{equation}\label{eq:33}
\begin{aligned}
&\langle(\beta_{n} \cup {\mathbb I}_{n} ){ \color{black} \mathscr F_{i_1,...,i_{n-1}}}, {\color{black} \mathscr L_{i_1,...,i_{n-1}}}\rangle~=\\
&=d^{-\sum_{k=1}^{n-1} i_k}\langle(\beta_{n} \cup {\mathbb I}_{n-1}) \tilde{ \mathscr U}_{0,i_1,...,i_{n-1},N-1-i_{n-1},...,N-1-i_{1}},\mathscr G_n^N\rangle.
\end{aligned}
\end{equation}
This relation will be enough in order to conclude our Lemma. We will prove this in four main steps. 
\subsection{Step 1- Encoding the coefficient in the homology class}
\begin{lemma}[Twisting the base points \cite{Martel}]\label{L:1}
Using the properties of the local system, we can move the purple base point and pick up to a coefficient, as in the following formula:
\begin{figure}[H]
\centering
\hspace{-5mm}\includegraphics[scale=0.30]{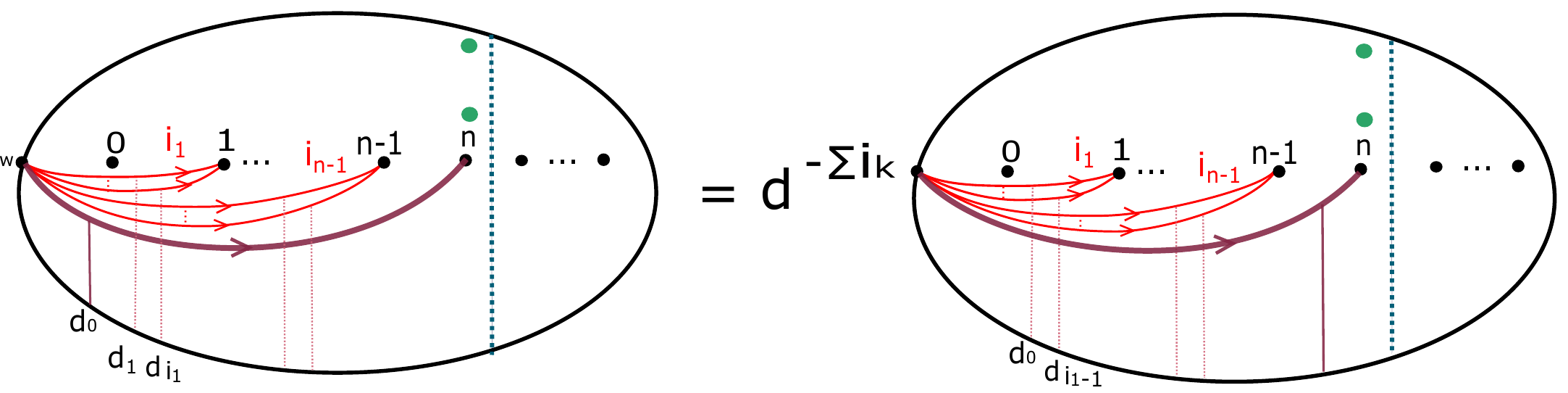}
\vspace{2mm}
\caption{Moving the purple base point}
\label{Picture1A}
\end{figure}
\end{lemma} 
\vspace{-6mm}
\begin{proof}
The two classes are constructed from the same geometric supports which are lifted using different paths to the base points. The coefficient from the formula comes from the evaluation of the local system on a certain loop, which corresponds to the ``difference'' between these two paths to the base points, from picture \ref{Picture1A}. This loop (in the configuration space) does not wind around the punctures but it twists the relative position of the points in the configuration $i_1+...+i_{n-1}$ times.
\end{proof}
Now we define an extra homology class, which comes from $\mathscr F_{i_1,...,i_{n-1}}$ but it is lifted in a different manner.
\begin{definition}[Change of the first homology class]

\

\noindent
Let $\tilde{\mathscr F}_{i_1,...,i_{n-1}} \in H^{-n,n-1}_{2n,(n-1)(N-1)+1} $ be the homology class given by the change of the lift, encoded by the path to the base points, presented in the picture below.
%%%%%%%%picture
\end{definition}
\begin{figure}[H]
\centering
$$ \mathscr F_{i_1,...,i_{n-1}} \in H^{-n,n-1}_{2n,(n-1)(N-1)+1} \ \ \ \ \ \ \  \rightarrow \ \ \ \ \ \ \ \tilde{\mathscr F}_{i_1,...,i_{n-1}}\in H^{-n,n-1}_{2n,(n-1)(N-1)+1}.$$
\hspace{-5mm}\includegraphics[scale=0.37]{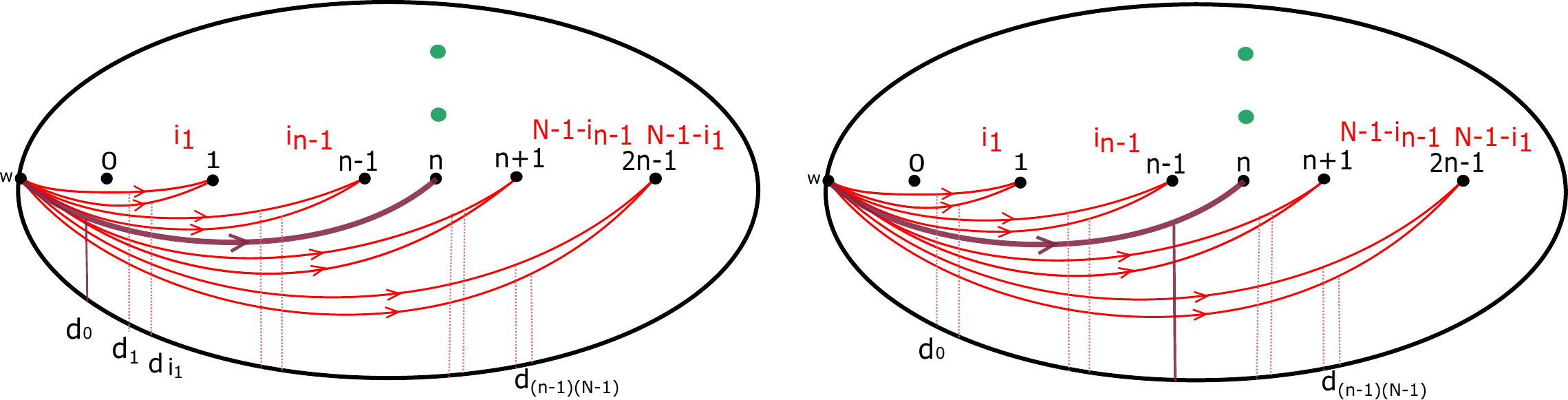}
\caption{Change of the base point}
\label{Picture1'}
\end{figure}
\vspace{-5mm}
Following Lemma \ref{L:1} we have that:
\begin{equation}
\mathscr F_{i_1,...,i_{n-1}}=d^{-\sum_{k=1}^{n-1} i_k}\tilde{\mathscr F} _{i_1,...,i_{n-1}}. 
\end{equation}
This property combined with relation \eqref{eq:33} show that it is enough to prove the following:
\begin{equation}\label{eq:3}
\begin{aligned}
&\langle(\beta_{n} \cup {\mathbb I}_{n} ){ \color{black} \tilde{\mathscr F}_{i_1,...,i_{n-1}}}, {\color{black} \mathscr L_{i_1,...,i_{n-1}}}\rangle=\\
&=\langle(\beta_{n} \cup {\mathbb I}_{n-1}) \tilde{ \mathscr U}_{0,i_1,...,i_{n-1},N-1-i_{n-1},...,N-1-i_{1}},\mathscr G_n^N\rangle.
\end{aligned}
\end{equation}
\subsection{Step 2- Change of the homology groups}
So far we have managed to encode the $d-$coefficient in the class $\tilde{\mathscr F}_{i_1,...,i_{n-1}}$. In this step we show that we can ``remove'' the $1$-manifolds which go around/ finish in the puncture labeled by $n$ from the geometric supports of the homology classes, without changing the intersection form. We start with the following definition.
\begin{definition}[Removing the $(n+1)^{st}$ puncture]\label{rempunct}

\

\noindent
Let us consider the homology classes which come from the geometric supports of $\tilde{\mathscr F}_{i_1,...,i_{n-1}}$ and ${\mathscr L}_{i_1,...,i_{n-1}}$ by removing the $1$-dimensional part which ends/ goes around the $(n+1)^{st}$ puncture (as in figure \ref{Picture1'}), and denote them as below:
\vspace{-4mm}
\begin{figure}[H]
\centering
 $$\mathscr F'_{i_1,...,i_{n-1}}\in H^{-(n-1),n-1}_{2n-1,(n-1)(N-1)} \ \ \ \ \ \text{ and } \ \ \ \ \ \mathscr L'_{i_1,...,i_{n-1}}\in H_{2n-1,(n-1)(N-1)}^{-(n-1),n-1,\partial}.$$
\hspace{-5mm}\includegraphics[scale=0.37]{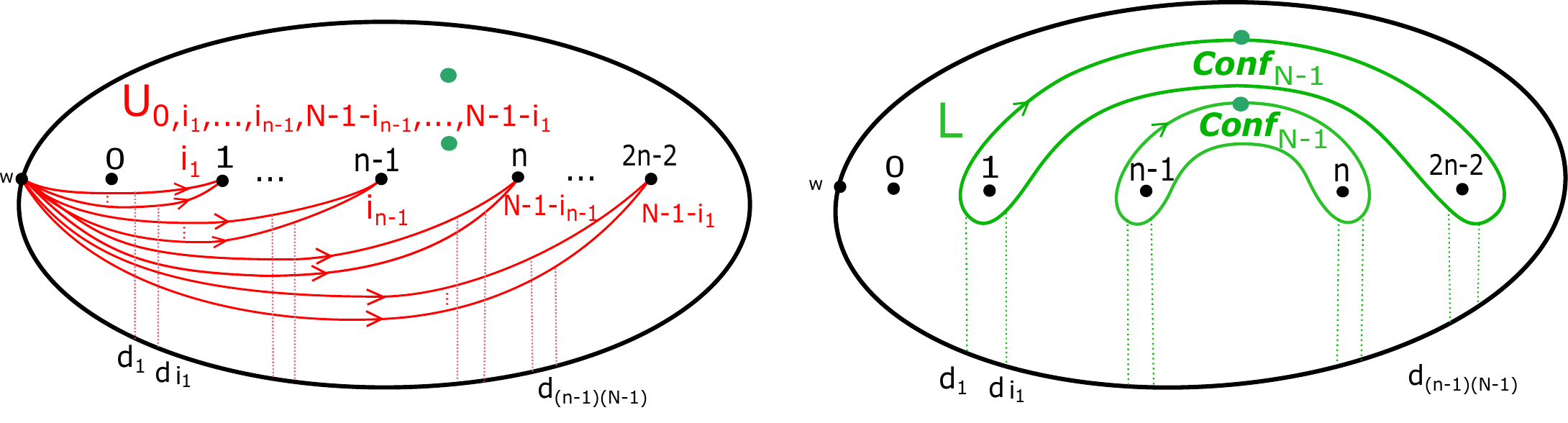}
\vspace{-3mm}
\caption{Submanifolds in $C_{3n-2,(n-1)(N-1)}$}
\label{Picture1}
\end{figure}
\end{definition}
\vspace{-8mm}
\begin{proposition} This change does not affect the value of the intersection pairing so we have:
\begin{equation}\label{eq:4}
\begin{aligned}
&\langle(\beta_{n} \cup {\mathbb I}_{n} ){ \color{black} \tilde{\mathscr F}_{i_1,...,i_{n-1}}}, {\color{black} \mathscr L_{i_1,...,i_{n-1}}}\rangle=\\
&=\langle(\beta_{n} \cup {\mathbb I}_{n-1}) { \color{black} \mathscr F'_{i_1,...,i_{n-1}}}, {\color{black} \mathscr L'_{i_1,...,i_{n-1}}}\rangle, \forall \beta_n \in B_n.
\end{aligned}
\end{equation}
\begin{proof}
We start with the intersection form from the left hand side and notice that the action of $(\beta_n \cup \mathbb I_n)$ does not modify either the red segment which goes between $w$ and the $(n+1)^{st}$ puncture or its corresponding path to the base point. Moreover, there exists an open neighbourhood of this puncture such that the geometric supports on the punctured disk which give the submanifolds $\bar{V}_{i_1,...,i_{n-1}}$ and $\bar{L}$ intersect in a unique point $y$ in this neighbourhood. Any intersection point $\bar{x}$ between $\bar{V}_{i_1,...,i_{n-1}}$ and $\bar{L}$ will contain $y$ as one of its components. Moreover, the loop associated to $\bar{x}$ in $C_{3n-1,(n-1)(N-1)+1}$ will have a component which is a loop in $C_{3n-2,(n-1)(N-1)}$ and another component given by a loop in the punctured disc, which is associated to the point $y$--denoted by $l_y$. This is constructed by the recipe from $\eqref{eq:1}$. Looking at the picture we notice that $l_y$ does not wind around any punctures and also it does not have relative winding in the configuration space with the other components of $l_{\bar{x}}$. This shows that $y$ does not contribute to the grading of the point $\bar{x}$.
This remark together with the method of computation of the pairing presented in equation \eqref{eq:1} conclude the relation between the intersection pairings.
\end{proof}
 
\end{proposition}
\subsection{Step 3- Twisting the local system}
In this part, we show that we can (un)twist the local system from $\phi^{-(n-1),n-1}$ back to $\phi^{0}$ if we pay the price of replacing the circles by figure eights. 
\begin{definition}[Change of the second homology class]
We consider the class ${\mathscr L}^{I'}_{i_1,...,i_{n-1}}\in H^{\partial}_{2n-1,(n-1)(N-1)}$ given by the lift of the product of the configuration spaces of $(N-1)$ points on the figure eights in $C_{2n-1,(n-1)(N-1)}$ (denoted by $G_n^N$), following the lift of the path from figure \ref{Picture5}.
\end{definition}
\begin{remark}
Since we changed back the local system, the configuration spaces on figure eights will lift to submanifolds in the covering $\tilde{C}_{2n-1,(n-1)(N-1)}$.

Moreover, we notice that the class ${\mathscr L}^{I'}_{i_1,...,i_{n-1}}\in H_{2n-1,(n-1)(N-1)}$ differs from $\mathscr G_{n}^N$ just by an element from the deck transformations, which comes from our choices of different lifts to the covering.
\end{remark}
\begin{definition}[Change of the first homology class] \label{D:change}

\

\noindent
Let ${\mathscr F}^{I'}_{i_1,...,i_{n-1}}\in H_{2n-1,(n-1)(N-1)}$ be the class given by the same geometric support in the base configuration space as $\mathscr F'_{i_1,...,i_{n-1}}$, lifted in the covering $\tilde{C}_{2n-1,(n-1)(N-1)}$, as in the picture below.
\vspace{-3mm}
\begin{figure}[H]
\centering
 $$ {\mathscr F}^{I'}_{i_1,...,i_{n-1}} \in H_{2n-1,(n-1)(N-1)} \ \ \ \text{ and }  \ \ \ {\mathscr L}^{I'}_{i_1,...,i_{n-1}}\in H^{\partial}_{2n-1,(n-1)(N-1)}.$$
\hspace{-3mm}\includegraphics[scale=0.35]{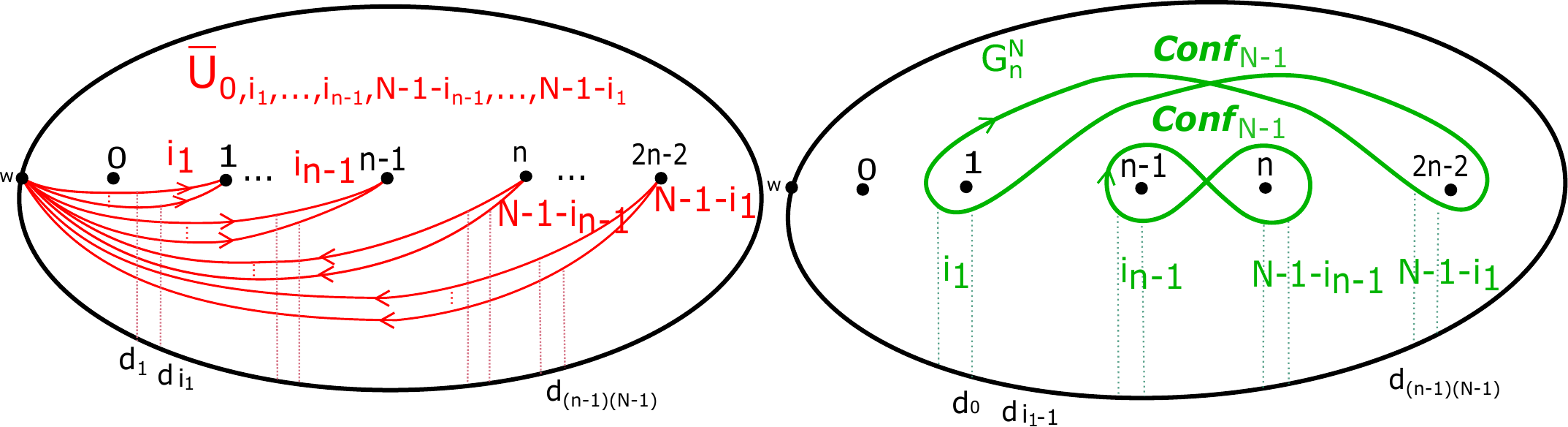}
\caption{Immersed submanifolds with simple paths to the base point}
\label{Picture5}
\end{figure}
\end{definition}
\begin{proposition}\label{P:4} 
We have the following relation, for any braid $\beta_n \in B_n$:
\begin{equation}\label{eq:5}
\begin{aligned}
&\langle(\beta_{n} \cup {\mathbb I}_{n-1} ){ \color{black} \mathscr F'_{i_1,...,i_{n-1}}}, {\color{black}  \mathscr L'_{i_1,...,i_{n-1}}}\rangle=\\
&=\langle(\beta_{n} \cup {\mathbb I}_{n-1}) { \color{black} {\mathscr F}^{I'}_{i_1,...,i_{n-1}}}, {\color{black} {\mathscr L}^{I'}_{i_1,...,i_{n-1}}}\rangle.
\end{aligned}
\end{equation}
\end{proposition}
\begin{proof}
We notice that the geometric supports of the classes: 
$$\mathscr F'_{i_1,...,i_{n-1}} \hspace{10mm} \text { and } \hspace{10mm} {\mathscr F}^{I'}_{i_1,...,i_{n-1}}$$ 
$$\mathscr L'_{i_1,...,i_{n-1}} \hspace{10mm} \text { and }  \hspace{10mm} {\mathscr L}^{I'}_{i_1,...,i_{n-1}}$$ 
are different just in the right half of the disc (containing the last $(n-1)$ punctures). Also, the local systems $\phi^{-(n-1),n-1}$ and $\phi^{0}$ evaluate the monodromies around the first punctures in the same way, but they differ because they count the monodromies around the last $(n-1)$ punctures with opposite orientations. 
On the other hand, the action of the braid group $B_n \cup {\mathbb I}_{n-1}$ is trivial on the half of the disc from the right hand side. 

Now, we look at the signs of the local intersections. We remark that the potential difference between the orientations of ${\bar U}_{0,i_1,...,i_{n-1},N-1-i_{n-1},...,N-1-i_{1}}$ and ${U}_{0,i_1,...,i_{n-1},N-1-i_{n-1},...,N-1-i_{1}}$ could occur because of the difference of the orientations of the red segments which end in the last $n-1$ punctures of the punctured disc. However, the same difference appears when we intersect these two manifolds with the dual supports $G_n^N$ and $L$ respectively. We conclude that overall the signs which come from these intersections will be positive in both cases. Concerning the intersections which appear in the left hand side on the disc, they will be literally the same in both situations, since the geometric supports coincide in this part of the picture.
%Concerning the sign of the local intersections, since we have changed the orientations of the red segments  which end in the last $n-1$ punctures, their intersections with the circles will have the same orientations as the previous ones between the 

Following the definition of the intersection pairing in terms of intersection points and evaluations of the local systems and the remarks from above, we conclude that the two intersection pairings give the same result.
\end{proof}
\subsection{Step 4- Pairings associated to different lifts of the same geometric support}

\

We remind the definition of the geometric supports that led to the model with embedded Lagrangians (from Theorem \ref{T:unified}), as well as the paths which prescribe their lifts:
\vspace{-4mm}
\begin{figure}[H]
\centering
 $${ \tilde{\mathscr U}_{0,i_1,...,i_{n-1},N-1-i_{n-1},...,N-1-i_{1}} \in H_{2n-1,(n-1)(N-1)}} \ \ \ \  \text{ and }\ \ \ \   {\mathscr G_{n}^N \in H^{\partial}_{2n-1,(n-1)(N-1)}}.$$
\hspace{-5mm}\includegraphics[scale=0.35]{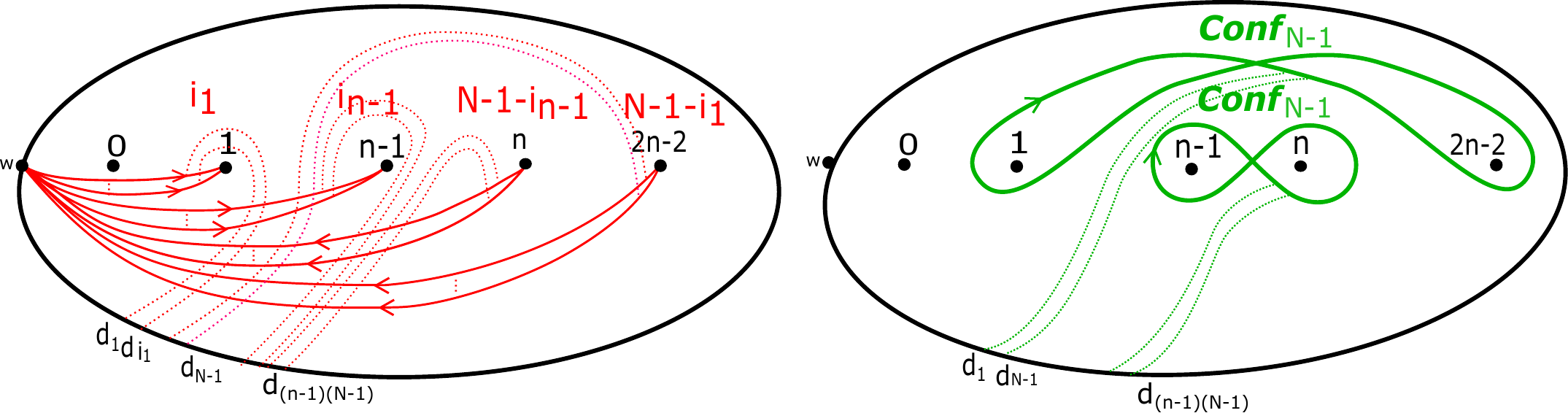}
\caption{Immersed classes with non-trivial paths to base points}
\label{Picture4}
\end{figure}
\vspace{-5mm}
Using the relation presented in equation \eqref{eq:3} together with the properties from equation \eqref{eq:4} and equation \eqref{eq:5}, we conclude that it is enough to show the following. 
\begin{lemma}\label{L:3}
There is an equality between the following intersection forms: 
\begin{equation}\label{eq:6}
\begin{aligned}
&\langle(\beta_{n} \cup {\mathbb I}_{n-1} ){ \color{black} {\mathscr F}^{I'}_{i_1,...,i_{n-1}}}, {\color{black}  {\mathscr L}^{I'}_{i_1,...,i_{n-1}}}\rangle=\\
&=\langle(\beta_{n} \cup {\mathbb I}_{n-1}) { \color{black} \tilde{ \mathscr U}_{0,i_1,...,i_{n-1},N-1-i_{n-1},...,N-1-i_1}}, {\color{black} { \mathscr G}^N_{n}} \rangle.
\end{aligned}
\end{equation}
%These are pairings between classes with the same geometric supports but with different paths to the base points.
\end{lemma}
\begin{proof}
The classes presented above are given by the same geometric submanifolds in the base configuration space, namely $$\bar{U}_{0,i_1,...,i_{n-1},N-1-i_{n-1},...,N-1-i_1} \text { and } G_n^N,$$ but they differ by the corresponding choices of the lifts. In the next part we will show that this difference does not affect the intersection pairing. 

We start with the classes $${\mathscr F}^{I'}_{i_1,...,i_{n-1}} \ \ \text{ and } \ \ \tilde{\mathscr U}_{0,i_1,...,i_{n-1},N-1-i_{n-1},...,N-1-i_1}.$$ Since they have the same geometric support in the configuration space, the two classes differ by an element of the deck transformations. More precisely, it exists a monomial in the ring of Laurent polynomials $\gamma(i_1,...,i_{n-1})\in \Z[x^{\pm 1}, d^{\pm 1}]$ such that:
\begin{equation}\label{eq:8}
{\mathscr F}^{I'}_{i_1,...,i_{n-1}}=\gamma(i_1,...,i_{n-1}) \cdot \tilde{ \mathscr U}_{0,i_1,...,i_{n-1},N-1-i_{n-1},...,N-1-i_1}.
\end{equation}
%(here, the sign comes from the change of orientations).
A similar argument shows that it exists a monomial $\gamma'(i_1,...,i_{n-1}) \in \Z[x^{\pm 1}, d^{\pm 1}]$ such that:
\begin{equation}\label{eq:8'}
{\mathscr L}^{I'}_{i_1,...,i_{n-1}}=\gamma'(i_1,...,i_{n-1}) \cdot \mathscr G_n^N.
\end{equation}

First, we investigate the pairings between the above classes, without any braid group action. We use the following result, presented in details in Section 7 of \cite{Cr3}. 
\begin{proposition}(\cite{Cr3})
The intersection pairing has the following property:
$$\langle\tilde{ \mathscr U}_{0,i_1,...,i_{n-1},N-1-i_{n-1},...,N-1-i_1}, { \mathscr G}^N_{n}\rangle=1.$$
\end{proposition}
In the next part, we show that the same property holds for the new classes. 
\begin{proposition} We have the following formula for the intersection pairing:
\begin{equation}\label{eq:9}  
\langle{\mathscr F}^{I'}_{i_1,...,i_{n-1}}, {\mathscr L}^{I'}_{i_1,...,i_{n-1}}\rangle=1.
\end{equation}
\end{proposition}

\

$$\hspace{33mm}\color{dgreen}J_1$$\\

\

$$\hspace{20mm}\color{dgreen} J_{n-1}$$
\vspace{-42mm}
\begin{figure}[H]
\centering
\includegraphics[scale=0.5]{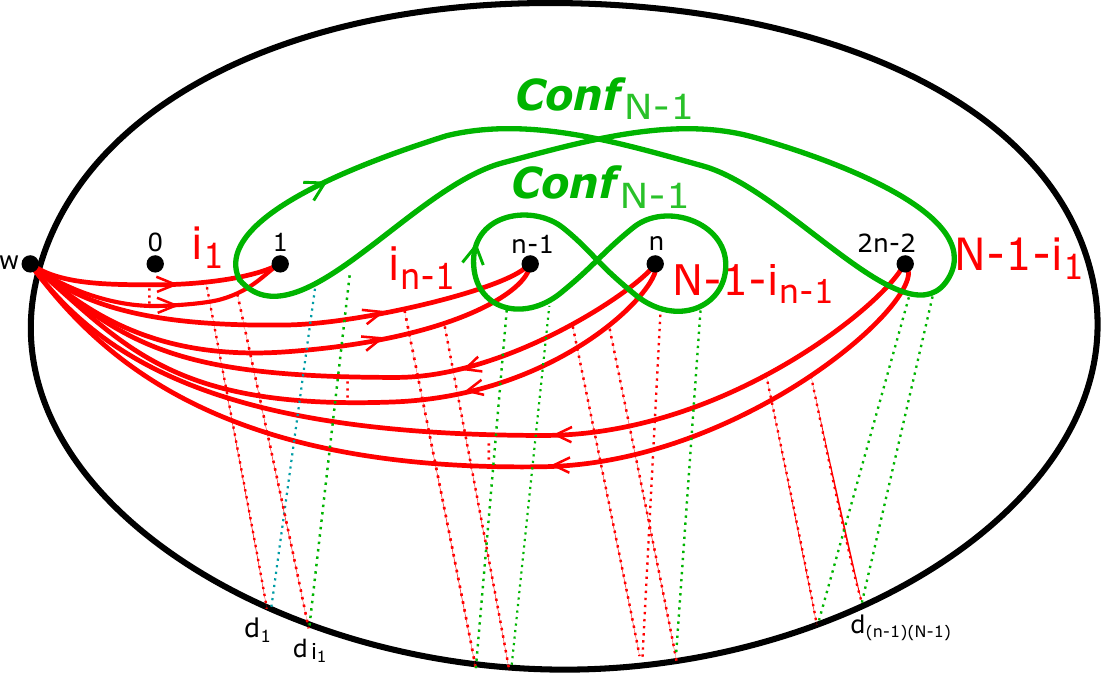}
\vspace{3mm}
\caption{Immersed submanifolds with simple paths to basepoints}
%\label{fig5}
\end{figure}
\vspace{-7mm}
\begin{proof} 
 Following formula \eqref{eq:1}, the intersection pairing is encoded by the intersection points between the geometric supports associated to ${\mathscr F}^{I'}_{i_1,...,i_{n-1}}$ and ${\mathscr L}^{I'}_{i_1,...,i_{n-1}}$. In order to have such an intersection point in the configuration space, we should choose on each figure eight  $(N-1)$ points at the intersection with the red segments. We notice that 
%we have in total $$i_0+...+i_{n-1}+(N-1-i_1)+...+(N-1-i_{n-1})=(n-1)(N-1)$$ such segments and
 there are no red segments or figure eights which end or go around the puncture labeled by $0$. This means that an intersection point between these geometric supports in the configuration space is given by the following description. For each $k\in \{1,..,n-1\}$, we should choose a natural number $j_k \in \N$ together with a collection of points as below:
\begin{enumerate}
\item[$\bullet$]$j_k$ points at the intersection between the red segments which end in the $k^{th}$ puncture and $J_k$.
\item[$\bullet$]$N-1-j_k$ points at the intersection between the red segments which end in the $(2n-1-k)^{th}$ puncture and $J_k$.
\end{enumerate}
Since we have exactly $i_k$ red segments which end in the $k^{th}$ puncture and $N-1-{i_k}$ red segments which end at the $(2n-1-k)^{th}$ puncture, it follows that:
$$i_k=j_k, \forall \ k \in \{1,...,n-1\}.$$
We conclude that the geometric supports intersect in a unique intersection point, which we denote by $$y\in \bar{ U}_{0,i_1,...,i_{n-1},N-1-i_{n-1},...,N-1-i_1} \cap G_n^N.$$ Now, looking at the local intersections we see that:
\begin{equation}
\begin{cases}
\alpha_y=1\\
\phi(l_{y})=1.
\end{cases}
\end{equation}
This shows that:
\begin{equation*}
\langle{\mathscr F}^{I'}_{i_1,...,i_{n-1}}, {\mathscr L}^{I'}_{i_1,...,i_{n-1}}\rangle=\alpha_y \cdot \phi(l_{y})=1.
\end{equation*}
and so ${\mathscr L}^{I'}_{i_1,...,i_{n-1}}$ has the right geometric property that we need and this concludes the Lemma. 
\end{proof}
Now, following \eqref{eq:8}, \eqref{eq:8'} and the intersection from \eqref{eq:9} we have:
\begin{equation}  
\langle \gamma(i_1,...,i_{n-1})  \cdot \tilde{ \mathscr U}_{0,i_1,...,i_{n-1},N-1-i_{n-1},...,N-1-i_1}, \gamma'(i_1,...,i_{n-1}) \cdot { \mathscr G}^N_{n}\rangle=1.
\end{equation}
Using the property that the form $\langle~ , ~\rangle$ is sesquilinear, we obtain the following:
\begin{equation}  
\begin{aligned}
\gamma(i_1,...,i_{n-1}) & \cdot  \left( \gamma'(i_1,...,i_{n-1})\right)^{-1} \cdot \\
& \langle \tilde{ \mathscr U}_{0,i_1,...,i_{n-1},N-1-i_{n-1},...,N-1-i_1}, { \mathscr G}^N_{n} \rangle=1.
\end{aligned}
\end{equation}
This equation combined with relation \eqref{eq:9} shows that:
\begin{equation}  
\gamma(i_1,...,i_{n-1})  \cdot  \left( \gamma'(i_1,...,i_{n-1})\right)^{-1}=1
\end{equation}
 and so we have
\begin{equation}  
\gamma(i_1,...,i_{n-1}) = \gamma'(i_1,...,i_{n-1}).
\end{equation}
 Going back to the evaluation of the intersection pairings after we act with the braid  on the first component, we conclude the following:
  \begin{equation}
\begin{aligned}
&\langle(\beta_{n} \cup {\mathbb I}_{n-1} ){ {\mathscr F}^{I'}_{i_1,...,i_{n-1}}}, {  {\mathscr L}^{I'}_{i_1,...,i_{n-1}}}\rangle=\\
&=\langle \gamma(i_1,...,i_{n-1}) \cdot (\beta_{n} \cup {\mathbb I}_{n-1}) { \tilde{ \mathscr U}_{0,i_1,...,i_{n-1},N-1-i_{n-1},...,N-1-i_1}}, \\
& \hspace{80mm} \gamma(i_1,...,i_{n-1}) \cdot  {{ \mathscr G}^N_{n}}\rangle=\\
&=\gamma(i_1,...,i_{n-1}) \cdot \left( \gamma(i_1,...,i_{n-1})\right)^{-1}\\
& \hspace{30mm} \langle(\beta_{n} \cup {\mathbb I}_{n-1}) { \tilde{ \mathscr U}_{0,i_1,...,i_{n-1},N-1-i_{n-1},...,N-1-i_1}},  {{ \mathscr G}^N_{n}}\rangle=\\
&=\langle(\beta_{n} \cup {\mathbb I}_{n-1}) { \tilde{ \mathscr U}_{0,i_1,...,i_{n-1},N-1-i_{n-1},...,N-1-i_1}}, {{ \mathscr G}^N_{n}}\rangle.
\end{aligned}
\end{equation}
\end{proof}
This shows that relation \eqref{eq:6} is true for any braid $\beta_n \in B_n$ and concludes that relation \eqref{eq:50} holds.
%This result combined with Corollary \ref{R:1} shows the equality of the intersection forms from relation \eqref{eq:6}. This in turn concludes the proof of the main Lemma.
\end{proof}

In the next part we use this property in order to prove the model from Theorem \ref{THEOREM}. We remind the formula of the specialisation of coefficients from definition \ref{D:1''}:
\begin{equation*}
\begin{aligned}
&\psi_{c,q,\lambda}: \Z[u^{\pm 1},x^{\pm1},d^{\pm1}]\rightarrow \Z[q^{\pm 1},q^{\pm \lambda}]\\
& \psi_{c,q,\lambda}(u)= q^{c \lambda}; \ \ \psi_{c,q,\lambda}(x)= \qs^{2 \lambda}; \ \ \psi_{c,q,\lambda}(d)=\qs^{-2}.
\end{aligned}
\end{equation*}
The topological model with immersed Lagrangians from Theorem \ref{T:unified} together with the relation between the Lagrangian intersections from Lemma \ref{L:2}, lead to the following relations:
\begin{equation}
\begin{aligned}
J_N(L,q)&=^{\text{Th} \ \ref{T:unified}} q^{-(N-1)w(\beta_n)} \cdot \ q^{-(N-1)(n-1)} \ \ \mathscr I_N(\beta_n)|_{\psi_{q,N-1}}=\\
&=^{\text{Lemma }  \ref{L:2}}q^{-(N-1)w(\beta_n)} \cdot \ q^{-(N-1)(n-1)} \ \mathscr S_N(\beta_n)|_{\psi_{q,N-1}}=\\
&= \left(  u^{-w(\beta_n)} \cdot \ u^{-(n-1)}  \mathscr S_N(\beta_n)\right)|_{\psi_{1,q,N-1}}=\\
&=\Lambda_N(\beta_n)|_{\psi_{1,q,N-1}}.
\end{aligned}
\end{equation}

\begin{equation}
\begin{aligned}
\Phi_N(L,\lambda)&=^{\text{Th} \ \ref{T:unified}} {\xi_N}^{-(1-N)\lambda w(\beta_n)} \cdot \ {\xi_N}^{-\lambda(1-N)(n-1)} \ \ \mathscr I_N(\beta_n)|_{\psi_{\xi_N,\lambda}}=\\
&=^{\text { Lemma }\ref{L:2}} {\xi_N}^{-(1-N)\lambda w(\beta_n)} \cdot \ {\xi_N}^{-\lambda(1-N)(n-1)} \ \mathscr S_N(\beta_n)|_{\psi_{\xi_N,\lambda}}=\\
&=  \left({u}^{-w(\beta_n)} \cdot \ {u}^{-(n-1)}  \mathscr S_N(\beta_n)\right)|_{\psi_{1-N,\xi_N,\lambda}}=\\
&=\Lambda_N(\beta_n)|_{\psi_{1-N,\xi_N,\lambda}}.
\end{aligned}
\end{equation}
These conclude the topological model for coloured Jones and Alexander polynomials presented in Theorem \ref{THEOREM}.
\section{Computable model with immersed Lagrangians}\label{7}
In this section we prove the state model from Theorem \ref{THEOREMIM}, which is a modification of the immersed model from Theorem \ref{T:unified} (from \cite{Cr3}), using the configuration space with an extra particle $C_{2n,(n-1)(N-1)+1}$ and simple paths to the base points. The change of the paths makes this model suitable for computations, as we will see in the next section. Also, we encode geometrically in this model the $d$-coefficients which appear in the first homology class $\mathscr E_n^N$ (definition \ref{D:2}).  For this construction we use the local system $\phi^{0}$.
\begin{definition}[Immersed classes suitable for computations]

\

Let us define the classes 
%$\mathscr F^{I}_{i_1,...,i_{n-1}} \in H^{0}_{2n,(n-1)(N-1)+1}$ and $\mathscr L^{I}_{i_1,...,i_{n-1}}\in H^{-n,\partial}_{0,(n-1)(N-1)+1}$ 
given by the geometric supports together with the paths to the base points presented below:
\end{definition}
\vspace{-6mm}
\begin{figure}[H]
\centering
$${\color{red} \mathscr F^{I}_{i_1,...,i_{n-1}} \in H^{0}_{2n,(n-1)(N-1)+1}} \ \ \ \ \ \ \  \text{ and }\ \ \ \ \ \ \ \  {\color{dgreen} \mathscr L^{I}_{i_1,...,i_{n-1}}\in H^{0,\partial}_{0,(n-1)(N-1)+1}}.$$
\hspace{-5mm}\includegraphics[scale=0.37]{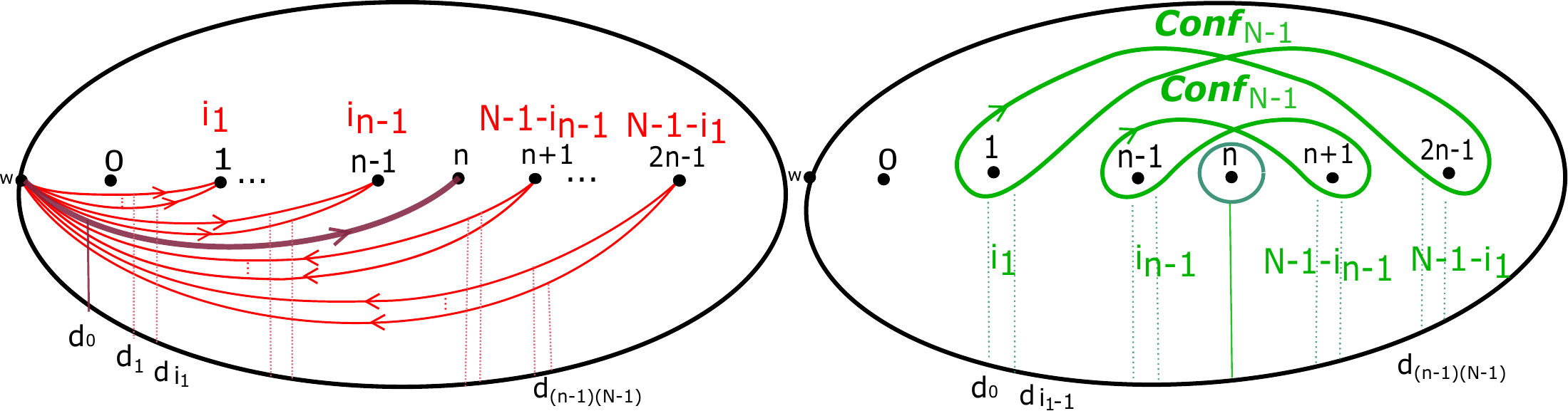}
\vspace{-1mm}
\caption{Classes immersed model-simple paths to the base points}
\label{Picture12}
\end{figure}
\vspace{-5mm}
Now we prove the unified model via states of immersed Lagrangian intersections presented in Theorem \ref{THEOREMIM}. We remind the state sum:
\begin{equation}\label{eq:0'}
\begin{aligned}
\Omega_N(\beta_n)&:=u^{-w(\beta_n)} u^{-(n-1)}\cdot\\
&\sum_{i_1,...,i_{n-1}=0}^{N-1} \langle(\beta_{n} \cup {\mathbb I}_{n} ){ \color{black} \mathscr F^{I}_{i_1,...,i_{n-1}}}, {\color{black} \mathscr L^{I}_{i_1,...,i_{n-1}}}\rangle \in \Z[u^{\pm},x^{\pm 1},d^{\pm 1}].
\end{aligned}
\end{equation}
Then, we will show that:
\begin{equation}
\begin{aligned}
&J_N(L,q)=\Omega_N(\beta_n)|_{\psi_{1,q,N-1}}\\
&\Phi_{N}(L,\lambda)=\Omega_N(\beta_n)|_{\psi_{1-N,\xi_N,\lambda}}.
\end{aligned}
\end{equation}
\begin{proof}
The proof of this result is based on the steps used for the proof of the embedded model from the previous section. Similarly to that situation, it is enough to show that:
\begin{equation}
\begin{aligned}
&\langle(\beta_{n} \cup {\mathbb I}_{n} ){ \mathscr F^{I}_{i_1,...,i_{n-1}}}, {\mathscr L^{I}_{i_1,...,i_{n-1}}}\rangle~=\\
&=d^{-\sum_{k=1}^{n-1} i_k}\langle(\beta_{n} \cup {\mathbb I}_{n-1}) \tilde{ \mathscr U}_{0,i_1,...,i_{n-1},N-1-i_{n-1},...,N-1-i_{1}},\mathscr G_n^N\rangle.
\end{aligned}
\end{equation}
Using Lemma \ref{L:1} in this context, associated to the local system $\phi^0$, we have that:
\begin{equation}
\mathscr F^{I}_{i_1,...,i_{n-1}}=d^{-\sum_{k=1}^{n-1} i_k} \cdot \tilde{\mathscr F}^{I}_{i_1,...,i_{n-1}}.
\end{equation}
Here $\tilde{\mathscr F}^{I}_{i_1,...,i_{n-1}}$ is the class obtained from the same geometric support and path to the base point as the ones used for $\tilde{\mathscr F}_{i_1,...,i_{n-1}}$ (from figure \ref{Picture1'}), but where we lift the submanifold to the covering associated to the local system $\phi^0$. Using this remark, we want to prove the following relation:
\begin{equation}\label{e:1}
\begin{aligned}
&\langle(\beta_{n} \cup {\mathbb I}_{n} ){ \tilde{\mathscr F}^{I}_{i_1,...,i_{n-1}}}, {\mathscr L^{I}_{i_1,...,i_{n-1}}}\rangle~=\\
&=~\langle(\beta_{n} \cup {\mathbb I}_{n-1}) \tilde{ \mathscr U}_{0,i_1,...,i_{n-1},N-1-i_{n-1},...,N-1-i_{1}},\mathscr G_n^N\rangle.
\end{aligned}
\end{equation}
Now, we look at the classes ${\mathscr F}^{I'}_{i_1,...,i_{n-1}}$ and ${\mathscr L}^{I'}_{i_1,...,i_{n-1}}$  which were introduced in definition \ref{D:change}. They are obtained from the geometric supports and paths to the base points of $\tilde{\mathscr F}^{I}_{i_1,...,i_{n-1}}$ and $\mathscr L^{I}_{i_1,...,i_{n-1}}$ by removing the curve/ circle which end or go around the $(n+1)^{st}$ puncture. We remark that this change does not modify the intersection form:
\begin{equation}\label{e:2}
\begin{aligned}
&\langle(\beta_{n} \cup {\mathbb I}_{n} ){ \tilde{\mathscr F}^{I}_{i_1,...,i_{n-1}}}, {\mathscr L^{I}_{i_1,...,i_{n-1}}}\rangle~=\\
&\langle(\beta_{n} \cup {\mathbb I}_{n-1} ){ {\mathscr F}^{I'}_{i_1,...,i_{n-1}}}, {{\mathscr L}^{I'}_{i_1,...,i_{n-1}}}\rangle.
\end{aligned}
\end{equation}
Following Lemma \ref{L:3}, we have:
\begin{equation}\label{e:3}
\begin{aligned}
&\langle(\beta_{n} \cup {\mathbb I}_{n-1} ){\mathscr F}^{I'}_{i_1,...,i_{n-1}}, {\mathscr L}^{I'}_{i_1,...,i_{n-1}}\rangle~=\\
&=~\langle(\beta_{n} \cup {\mathbb I}_{n-1}) \tilde{ \mathscr U}_{0,i_1,...,i_{n-1},N-1-i_{n-1},...,N-1-i_{1}},\mathscr G_n^N\rangle.
\end{aligned}
\end{equation}
Relations \eqref{e:2} and \eqref{e:3} show that equation \eqref{e:1} holds and this concludes the proof of this model.
\end{proof}
\section{Useful form for computations} \label{8}
In this section we present two formulas which we obtained along the way, in the proofs from the previous two sections, where we used classes given by geometric supports where we removed the circle and the extra point from the middle of the picture (which had the role of encoding the $d$-coefficients). This might be useful as well for the computational point of view.

 \subsection{Embedded case} We start with the embedded state sum model. 
We remind that in definition \ref{rempunct} for any $i_1,...,i_{n-1}\in \{0,...,N-1\}$ we constructed two classes denoted by: 
\vspace{-5mm}
\begin{center}
\begin{figure}[H]
\centering
$${\color{red} \mathscr F'_{i_1,...,i_{n-1}} \in H^{-(n-1),n-1}_{2n-1,(n-1)(N-1)}} \ \ \text{ and }\ \  {\color{dgreen} \mathscr L'_{i_1,...,i_{n-1}}\in H^{-(n-1),(n-1),\partial}_{2n-1,(n-1)(N-1)}}.$$
\includegraphics[scale=0.35]{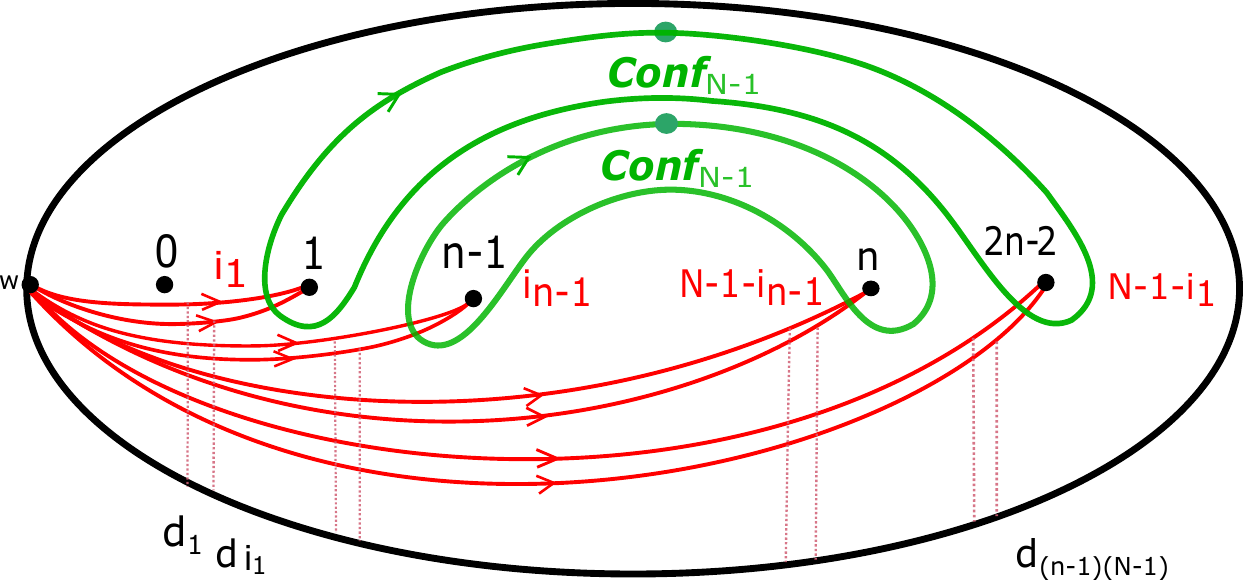}
%\caption{Embedded Lagrangians}
\end{figure}
\end{center}
\vspace{-7mm}
\begin{corollary}[Modified embedded state sum model]\label{CC:Computation''}
Let $L$ be an oriented link and $\beta_n \in B_n$ such that $L=\hat{\beta}_n$.  We define the polynomial:
\begin{equation}\label{C:Computation''}
\begin{aligned}
\Lambda'_N(\beta_n)(u,x,d)&:=u^{-w(\beta_n)} u^{-(n-1)} \sum_{i_1,...,i_{n-1}=0}^{N-1} d^{-\sum_{k=1}^{n-1}i_k} \\
&\langle(\beta_{n} \cup {\mathbb I}_{n-1} ){ \mathscr F'_{i_1,...,i_{n-1}}}, { \mathscr L'_{i_1,...,i_{n-1}}}\rangle\in \Z[u^{\pm1},x^{\pm 1},d^{\pm 1}].
\end{aligned}
\end{equation}
Then $\Lambda'_N$ gives the $N^{th}$ coloured Jones and $N^{th}$ coloured Alexander polynomials:
\begin{equation}
\begin{aligned}
&J_N(L,q)=\Lambda_N'(\beta_n)|_{\psi_{1,q,N-1}}\\
&\Phi_{N}(L,\lambda)=\Lambda'_N(\beta_n)|_{\psi_{1-N,\xi_N,\lambda}}.
\end{aligned}
\end{equation}
\end{corollary}
\subsection{Immersed case} Now, we present an immersed state sum model which follows from the proof of the main theorem. We remind that in definition \ref{D:change} we introduced the homology classes:
\begin{center}
\begin{figure}[H]
\centering
$${\color{red} {\mathscr F}^{I'}_{i_1,...,i_{n-1}} \in H^{-(n-1)}_{2n-1,(n-1)(N-1)}} \ \ \text{ and }\ \  {\color{dgreen} {\mathscr L}^{I'}_{i_1,...,i_{n-1}}\in H^{-(n-1),\partial}_{2n-1,(n-1)(N-1)}}.$$
\includegraphics[scale=0.35]{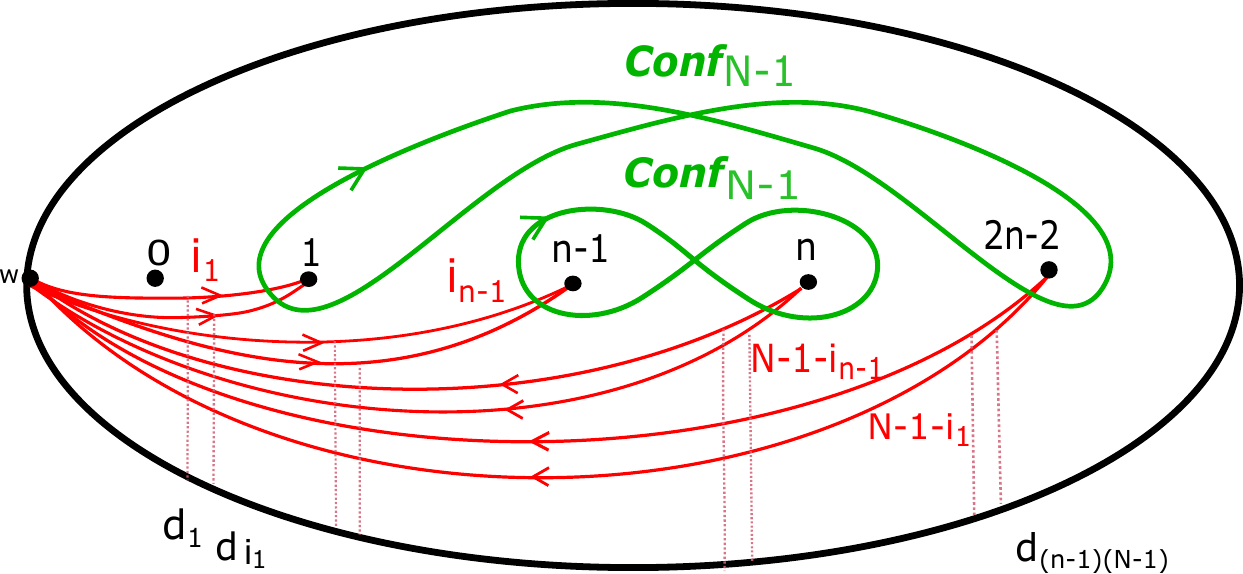}
%\caption{Embedded Lagrangians}
\end{figure}
\end{center}
\vspace{-7mm}
\begin{corollary}[Modified immersed state sum model]\label{CC:Computationimmersed}
Let $L$ be an oriented link such that $L=\hat{\beta}_n$.  We define the polynomial:
\begin{equation}\label{C:Computationimmersed}
\begin{aligned}
\Omega'_N(\beta_n)(u,x,d)&:=u^{-w(\beta_n)} u^{-(n-1)} \sum_{i_1,...,i_{n-1}=0}^{N-1} d^{-\sum_{k=1}^{n-1}i_k} \\
&\langle(\beta_{n} \cup {\mathbb I}_{n-1} ){ {\mathscr F}^{I'}_{i_1,...,i_{n-1}}}, { {\mathscr L}^{I'}_{i_1,...,i_{n-1}}}\rangle\in \Z[u^{\pm1},x^{\pm 1},d^{\pm 1}].
\end{aligned}
\end{equation}
Then $\Omega'_N$ gives the $N^{th}$ coloured Jones and $N^{th}$ coloured Alexander polynomials:
\begin{equation}
\begin{aligned}
&J_N(L,q)=\Omega_N'(\beta_n)|_{\psi_{1,q,N-1}}\\
&\Phi_{N}(L,\lambda)=\Omega'_N(\beta_n)|_{\psi_{1-N,\xi_N,\lambda}}.
\end{aligned}
\end{equation}
\end{corollary}
\subsection{Example: Unified model for the Jones and Alexander polynomials} \label{E:Example}
Following the intersection models from above we see that for the particular case where $N=2$ the polynomial $\Lambda_2$ specialises to both Alexander polynomial and Jones polynomials (Theorem \ref{THEOREM}), and this is true also for $\Omega_2$ (Theorem \ref{THEOREMIM}). 

Now, we see an example of computations. We will use the formulas from Corollary \ref{CC:Computation''} and Corollary \ref{CC:Computationimmersed}.

 Let $T$ be the trefoil knot, which is the closure of $\sigma^3$, where $\sigma \in B_2$.
In this case, we have $n=2$. In the state sum from formula \eqref{C:Computation''} we have two intersections, which correspond to the index $i_1=1$ (Case I) or $i_1=0$ (Case II). We draw the associated geometric supports in figure \ref{Tembedded}. The weights of the intersection points come from the local system evaluated on the loops which are constructed following the recipe from formula \eqref{eq:1}. For the intersection points which are on the left hand side of the disc, we have an extra weight which multiplies by $d^{-1}$ (this comes from the $d$-coefficients from equation \eqref{C:Computation''}). 
 
We obtain: 
\begin{equation}
\begin{aligned}
\Lambda'_2(\sigma^3)(u,x,d)&= u^{-4} \left( d^{-1}\langle( \sigma^3 \cup {\mathbb I}_{1} ) \mathscr F'_{1}, \mathscr L'_{1}\rangle+ \langle( \sigma^3 \cup {\mathbb I}_{1} )  \mathscr F'_{0},  \mathscr L'_{0}\rangle \right)=\\
&=u^{-4}\left( d^{-1}(-x^{-3}+x^{-2}-x^{-1}+1)+1\right).
\end{aligned}
\end{equation}
For the Jones polynomial, we have to specialise: $u=q, \ x=q^{2},\ d=q^{-2}$ and we get the normalised version of this invariant:
\begin{equation}
\begin{aligned}
J_2(T,q)=\Lambda'_2(\sigma^3)(u,x,d)|_{\psi_{1,q,1}}= -q^{-8}+q^{-2}+q^{-6}.
\end{aligned}
\end{equation}
Dually, if we want to get the Alexander polynomial, we have to specialise using the variables: $u={\xi_2}^{-\lambda}, \ x={\xi_2}^{2\lambda}, \ d={\xi_2}^{-2}=-1$ and we obtain:
\begin{equation}
\begin{aligned}
\Phi_2(T,\lambda)=\Lambda'_2(\sigma^3)(u,x,d)|_{\psi_{-1,\xi_2=i,\lambda}}={\xi_2}^{2\lambda}-1+{\xi_2}^{-2\lambda}.
\end{aligned}
\end{equation}
Replacing the variable ${\xi}^{2\lambda}_2$ with $x$, we get the usual form of the Alexander polynomial:
\begin{equation}
\begin{aligned}
\Delta(T,x)=x-1+x^{-1}.
\end{aligned}
\end{equation}
\begin{remark}
In our model we work with the normalised version of coloured Jones polynomials,  which means that for the particular case when $N=2$ we recover $J_2(T,q)$ which is the reduced Jones polynomial (see \cite{BN} Section 2).  
\end{remark}

Now we compute this example using the immersed model. In this case, we have also two intersection pairings and we drew the associated intersections between the geometric supports in picture \ref{Timmersed}.
Following the immersed model formula from equation \eqref{C:Computationimmersed} we obtain:
\begin{equation}
\begin{aligned}
\Omega'_2(\sigma^3)(u,x,d)&= u^{-4} \left(d^{-1} \langle( \sigma^3 \cup {\mathbb I}_{1} ) {\mathscr F}^{I'}_{1}, \mathscr L^{I'}_{1}\rangle+ \langle( \sigma^3 \cup {\mathbb I}_{1} )  \mathscr F^{I'}_{0},  \mathscr L^{I'}_{0}\rangle \right)=\\
&=u^{-4}\left( d^{-1}(-x^{-3}+x^{-2}-x^{-1}+1)+1\right).
\end{aligned}
\end{equation}
Specialising in the appropiate way, we conclude that this model also recovers the Jones and Alexander polynomials of the trefoil knot:
\begin{equation}
\begin{aligned}
&J_2(T,q)=\Omega'_2(\sigma^3)(u,x,d)|_{\psi_{1,q,1}}= -q^{-8}+q^{-2}+q^{-6}.\\
&\Delta(T,x)=\Omega'_2(\sigma^3)(u,x,d)|_{\left(\psi_{-1,\xi_2,\lambda}; x={\xi_2}^{2\lambda}\right)}=x-1+x^{-1}.
\end{aligned}
\end{equation}
\clearpage

\begin{center}
$$ \text{Immersed state sum} \hspace{35mm} \text{ Embedded state sum}$$
$$ \ \ \Lambda'_2(\sigma^3) \hspace{55mm} \Omega'_2(\sigma^3)$$
$J_2(T,q)   \ \  \ \ \ \ \ \ \ \ \Delta(T,x) \hspace{33mm} J_2(T,q)   \ \ \ \ \ \ \  \ \ \ \Delta(T,x)$

\

$ \ \ \text{ Jones}  \ \ \ \ \  \ \ \ \ \  \ \text {Alexander} \hspace{30mm} \ \ \text{ Jones}  \ \ \ \ \ \ \ \ \ \ \ \text {Alexander}$\\
$\text{ polynomial }   \ \ \ \ \ \ \text {polynomial} \hspace{28mm} \text{ polynomial } \ \ \ \  \ \text {polynomial}$
\begin{figure}[H]
    \centering
    \begin{minipage}{0.45\textwidth}
        \centering
        \includegraphics[width=1\textwidth]{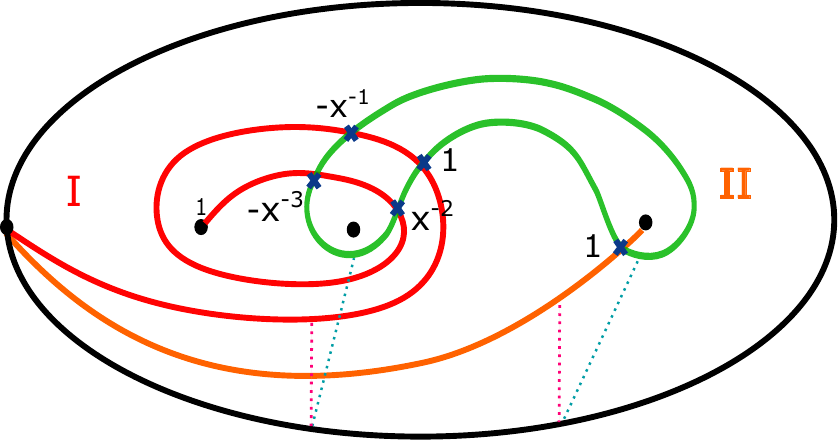} % first figure itself
        \caption{Trefoil knot: Embedded picture}
        \label{Tembedded}
    \end{minipage}\hfill
    \begin{minipage}{0.45\textwidth}
        \centering
        \includegraphics[width=1\textwidth]{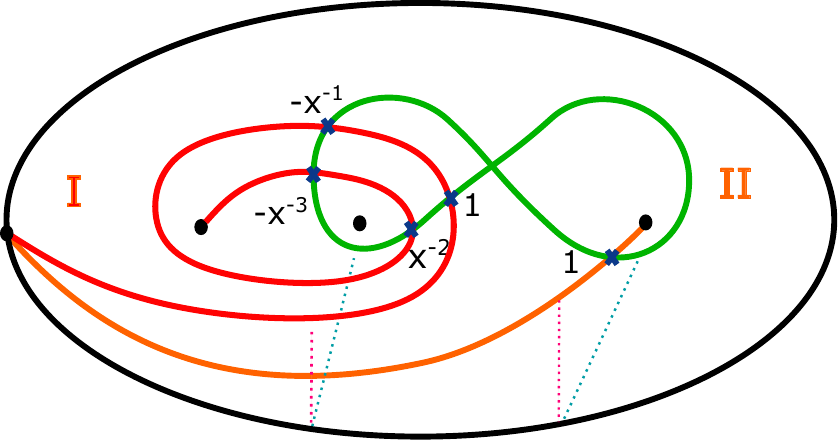} % second figure itself
        \caption{Trefoil knot: Immersed picture}
        \label{Timmersed}  
    \end{minipage}
\end{figure}
\end{center}

\

\

\href{http://www.cristinaanghel.ro/}{www.cristinaanghel.ro}  

\end{document}